\documentclass[11pt]{article}
%%%%%%%%%%%%%%%%%%%%%%%%%%%%%%%%%%%%%%%%%%%%%%%%%%%%%%%%%%%%%%%%%%%%%%%%%%%%%%%%%%%%%%%%%%%%%%%%%%%%%%%%%%%%%%%%%%%%%%%%%%%%
\usepackage{amsfonts}
\usepackage{amsmath}
\newcommand{\func}[1]{\operatorname{#1}}

\setcounter{MaxMatrixCols}{10}
%TCIDATA{OutputFilter=LATEX.DLL}
%TCIDATA{Version=4.10.0.2359}
%TCIDATA{Created=Tuesday, January 19, 2021 20:08:21}
%TCIDATA{LastRevised=Thursday, June 24, 2021 18:41:39}
%TCIDATA{<META NAME="GraphicsSave" CONTENT="32">}
%TCIDATA{<META NAME="DocumentShell" CONTENT="Standard LaTeX\Blank - Standard LaTeX Article">}
%TCIDATA{Language=American English}
%TCIDATA{CSTFile=40 LaTeX article.cst}

\oddsidemargin=0cm \textwidth=15.5cm \textheight=23cm
\topmargin=-0.5cm
\newtheorem{theorem}{Theorem}

\newtheorem{corollary}[theorem]{Corollary}

\newtheorem{definition}[theorem]{Definition}

\newtheorem{remark}[theorem]{Remark}

\newenvironment{proof}[1][Proof]{\noindent\textbf{#1.} }{\ \rule{0.3em}{0.3em}}
\numberwithin{equation}{section}

\begin{document}

\title{Stochastic applications of Caputo-type convolution operators with
non-singular kernels }
\author{Luisa Beghin \thanks{%
Sapienza University of Rome, Italy} \and Michele Caputo \thanks{%
Accademia Nazionale dei Lincei, Italy} }
\date{}
\maketitle

\begin{abstract}
We consider here convolution operators, in the Caputo sense, with
non-singular kernels. We prove that the solutions to some
integro-differential equations with such operators (acting on the space
variable) coincide with the transition densities of a particular class of L%
\'{e}vy subordinators (i.e. compound Poisson processes with non-negative
jumps). We then extend these results to the case where the kernels of the
operators have random parameters, with given distribution. This assumption
allows greater flexibility in the choice of the kernel's parameters and,
consequently, of the jumps' density function.

\textbf{Keywords}: Caputo-like convolution operators; Prabhakar
function; Compound Poisson process; Bernstein functions; Risk
reserve process.

\noindent \emph{AMS Mathematical Subject Classification (2020):} 26A33,
47G20, 60G51, 33B20.
\end{abstract}

\section{Introduction}

The convolution operators with non-singular kernels have drawn, in recent
years, a wide interest, from both the theoretical and the applied points of
view: see, for example, \cite{LOS}, \cite{ATAN}, \cite{YAN}, \cite{ZHE},
\cite{ARS} and the references therein.

Let $f:\mathbb{R}^{+}\rightarrow \mathbb{R}$ be a differentiable and
absolutely integrable function in $AC_{loc}\left( 0,+\infty \right) $ (i.e.
with integrable first derivative); we use here the following operator
\begin{equation}
^{CF}D_{x}^{\alpha }f(x):=\frac{B(\alpha )}{1-\alpha }\int\limits_{0}^{x}%
\frac{d}{dz}f(z)e^{-\frac{\alpha }{1-\alpha }(x-z)}dz,\qquad x>0,\text{ }%
\alpha \in (0,1),  \label{cf}
\end{equation}%
(where $B(\alpha )$ is a positive normalizing constant), which is a variant
of the so-called \textquotedblleft Caputo-Fabrizio fractional derivative"
introduced in \cite{CAP}: in our case the lower limit of integration is
zero, since we identify the kernel with the tail of a (bounded) L\'{e}vy
measure on $(0,+\infty ).$

Analogously, we define the following convolution operator

\begin{equation}
D_{x}^{\alpha ,\nu }f(x):=\frac{B(\alpha )}{1-\alpha }\int\limits_{0}^{x}%
\frac{d}{dz}f(z)E_{\nu }\left( -\frac{\alpha }{1-\alpha }(x-z)^{\nu }\right)
dz,\qquad x>0,\text{ }\alpha \in (0,1),\nu \in (0,1],  \label{ab}
\end{equation}%
where $E_{\nu }(\cdot )$ is the Mittag-Leffler function, i.e. $E_{\nu
}(x):=\sum_{j=0}^{\infty }x^{j}/\Gamma (\nu j+1),$ for $x\in \mathbb{R},$ $%
\func{Re}(\nu )>0$. We note that (\ref{ab}) reduces, for $\alpha =\nu $, to
the so-called \textquotedblleft Atangana-Baleanu fractional derivative" (in
the Caputo sense), see \cite{ATA}, while, for $\nu =1$, it coincides with (%
\ref{cf}). As for (\ref{cf}), the kernel of (\ref{ab}) is non-singular in
the origin, since $E_{\nu }(0)=1.$

Moreover, we introduce here the following operator%
\begin{equation}
\mathcal{D}_{x}^{\alpha ,\rho }f(x):=\frac{1}{\Gamma (\rho )}%
\int\limits_{0}^{x}\frac{d}{dz}f(z)\Gamma \left( \rho ;k_{\alpha }z\right)
dz,\qquad x>0,\text{ }k_{\alpha }>0,\rho \in (0,1],  \label{ig}
\end{equation}%
where $\Gamma (\rho ;x):=\int_{x}^{+\infty }e^{-w}w^{\rho -1}dw$ is the
upper-incomplete gamma function. Also (\ref{ig}) generalizes (\ref{cf}), to
which it reduces for $\rho =1$ and $k_{\alpha }=\alpha /(1-\alpha ).$ Again
the kernel of (\ref{ig}) is non-singular in the origin, since $\Gamma \left(
\rho ;0\right) =\Gamma \left( \rho \right) .$

Our aim is to analyze the role of this kind of operators in the field of
stochastic processes; while the random models associated to differential
equations with classical fractional derivatives have been extensively
studied, the probabilistic applications of the above defined operators are
not yet explored.

Let us start by recalling that $\psi $:$(0,+\infty )\rightarrow \mathbb{R}%
^{+}$ is a Bernstein function if it is non-negative, of class $C^{\infty }$
and such that, for any $x>0,$ $(-1)^{k}\frac{d^{k}}{dx^{k}}\psi (x)\leq 0,$ $%
k\in \mathbb{N}.$ Then it is well-known that $\psi $ admits the following
representation (see \cite{SCH}, p.21)%
\begin{equation}
\psi (x)=c+bx+\int\limits_{0}^{+\infty }(1-e^{-zx})\mu (dz),  \label{bb}
\end{equation}%
where $c,b\geq 0$ and $\mu $ is a measure on $(0,+\infty )$ satisfying $%
\int\limits_{0}^{+\infty }(1\wedge z)\mu (dz)<\infty $, called L\'{e}vy
measure$.$ Moreover, the triplet $\left( c,b,\mu \right) $ determines
uniquely $\psi $ (and the reverse holds as well).

Let us define the stochastic process $\mathcal{S}:=\mathcal{S}(t),t\geq 0$,
assuming that it is a subordinator, i.e. L\'{e}vy and almost surely
non-decreasing. Let $h_{\mathcal{S}}(B,t):=P(\mathcal{S}(t)\in B)$ be its
transition probabilities, for any $t\geq 0$ and Borel interval $B\subset
(0,+\infty ).$ Let now $\overline{\mu }(s):=\int_{s}^{+\infty }\mu (dz),$
for $s\geq 0,$ be the so-called tail L\'{e}vy measure; if $\overline{\mu }%
(\cdot )$ is absolutely continuous on $(0,+\infty )$ and $\int_{0}^{+\infty
}\mu (z)dz=+\infty $, the corresponding subordinator $\mathcal{S}$ has
absolutely continuous distribution (see \cite{SAT}, p.177), with density $h_{%
\mathcal{S}}(dx,t)=h_{\mathcal{S}}(x,t)dx,$ $t,x\geq 0,$ such that%
\begin{equation}
\mathbb{E}e^{-\eta \mathcal{S}(t)}:=\int\limits_{0}^{+\infty }e^{-\eta x}h_{%
\mathcal{S}}(x,t)dx=e^{-\psi (\eta )t},\qquad \eta >0,\;t\geq 0  \label{lp2}
\end{equation}%
(see \cite{SCH}, p.49). We define now the convolution operator $\mathcal{D}%
_{x}^{\psi }$ as follows%
\begin{equation}
\mathcal{D}_{x}^{\psi }f(x):=\int\limits_{0}^{x}\frac{d}{dz}f(z)\overline{%
\mu }_{\psi }(x-z)dz,  \label{co}
\end{equation}%
for any differentiable and absolutely integrable function in $AC_{loc}\left(
0,+\infty \right) .$ If $f(0)<\infty ,$ it can be alternatively written as $%
\mathcal{D}_{x}^{\psi }f(x)=\frac{d}{dz}\int\limits_{0}^{x}f(z)\overline{\mu
}_{\psi }(x-z)dz-\overline{\mu }_{\psi }(x)f(0)$ (see \cite{KOC} and \cite%
{KOC2}). Note that we have emphasized the dependence of the tail L\'{e}vy
measure on the Bernstein function by denoting it as $\overline{\mu }_{\psi
}(\cdot )$; analogously, we will denote by $\mathcal{S}_{\psi }$ the
subordinator with L\'{e}vy triplet $(0,0,\mu _{\psi }),$ under the
assumption that $c=b=0,$ without loss of generality$.$ The Laplace transform
of (\ref{co}) can be easily obtained, if again $f(0)<\infty $, by
considering that $\int\limits_{0}^{+\infty }e^{-\eta x}\overline{\mu }_{\psi
}(x)dx=\psi (\eta )/\eta ,$ and reads%
\begin{equation}
\mathcal{L}\left\{ \mathcal{D}_{x}^{\psi }f(x);\eta \right\} =\psi (\eta )%
\widetilde{f}(\eta )-\frac{\psi (\eta )}{\eta }f(0),  \label{lt3}
\end{equation}%
where $\widetilde{f}(\eta ):=\mathcal{L}\left\{ f(x);\eta \right\} $ and $%
\mathcal{L}\left\{ \cdot ;\eta \right\} $ denotes the Laplace transform with
respect to $x$. It is proved in \cite{TOA} that, if the L\'{e}vy measure is
absolutely continuous and unbounded and if the subordinator $\mathcal{S}%
_{\psi }$ has density $h_{\mathcal{S}_{\psi }}(x,t)$ vanishing in the
origin, the following initial value problem%
\begin{equation}
\left\{
\begin{array}{l}
\frac{\partial }{\partial t}h(x,t)=-\mathcal{D}_{x}^{\psi }h(x,t) \\
h(0,t)=0,\qquad h(x,0)=\delta (x)%
\end{array}%
\right. ,  \label{cau}
\end{equation}%
is satisfied by $h_{\mathcal{S}_{\psi }}(x,t)$ for $x,t\geq 0.$

Since we consider here integral operators with non-singular kernel in the
origin, such as (\ref{cf}), (\ref{ab}) and (\ref{ig}), we move to the case
where the L\'{e}vy measure has finite mass, i.e. $\int_{0}^{+\infty }\mu
_{\psi }(dz)<\infty ,$ assuming also that $b=0$ in (\ref{bb}), without loss
of generality$.$ The corresponding Bernstein function $\psi $ is
consequently bounded and the subordinator $\mathcal{S}_{\psi }$ is a
driftless, step process. Moreover, almost all its paths have a finite number
of jumps on every compact interval (finite activity). We will make these
assumptions, together with $c=0,$ which implies that $\mathcal{S}_{\psi }$
is a strict subordinator. Then, in our case, the condition for the absolute
continuity of $h_{\mathcal{S}}(\cdot ,t)$ does no longer hold. Moreover, in
some cases considered here also the assumption of density vanishing in the
origin does not hold; on the contrary, as happens even for well-known
processes (such as, for example, the gamma subordinators), the density is
infinite for $x=0.$ Thus the result in (\ref{cau}) must be modified
accordingly.

It is well known that, under the assumption that $\int_{0}^{+\infty }\mu
_{\psi }(dz)<\infty ,$ a L\'{e}vy process with triplet $(0,0,\mu _{\psi })$
is a compound Poisson process, i.e.
\begin{equation}
\mathcal{S}_{\psi }(t)=\sum_{j=1}^{N(t)}X_{j}^{\psi },  \label{ss}
\end{equation}%
where $N:=N(t),t\geq 0$ is a Poisson process with parameter $\lambda =1$,
independent of $X_{j}^{\psi },$ for any $j=1,2...$ (see, for example, \cite%
{APP}, p.49). Moreover, the addends $X_{j}^{\psi }$ are, for $j=1,2,...,$
non-negative, independent, identically distributed (i.i.d.) random variables
with $F_{X_{j}^{\psi }}(x):=P(X_{j}^{\psi }\leq x)$ such that%
\begin{equation}
\mathcal{L}\left\{ F_{X_{j}^{\psi }}(y);\eta \right\} =\frac{1}{\eta }\left[
1-\psi (\eta )\right] .  \label{fr}
\end{equation}%
In this case, the distribution function of $\mathcal{S}_{\psi }$ reads, for $%
y\in \mathbb{R},$%
\begin{equation}
F_{\mathcal{S}_{\psi }}(y,t):=P\{\mathcal{S}_{\psi
}(t)<y\}=e^{-t}1_{[0,+\infty )}(y)+\int_{-\infty }^{y}f_{\mathcal{S}_{\psi
}}(x,t)dx,  \label{cc}
\end{equation}%
where
\begin{equation}
f_{\mathcal{S}_{\psi }}(x,t)=e^{-t}\sum_{n=1}^{\infty }\frac{t^{n}}{n!}%
f_{X_{j}^{\psi }}^{\ast (n)}(x),  \label{lp}
\end{equation}%
is the density of the absolutely continuous component and $f^{\ast (n)}$
denotes the $n$-fold convolution of the function $f$. The compound Poisson
process has important applications in different fields, ranging from models
of insurance risk, to the analysis of statistical behavior in social and
biological systems, as well as to the treatment of certain types of random
dynamics in physics.

As a consequence of (\ref{cc}) and (\ref{lp}), the Laplace transform of $f_{%
\mathcal{S}_{\psi }}$is given by%
\begin{eqnarray}
\widetilde{f}_{\mathcal{S}_{\psi }}(\eta ,t) &=&e^{-t}\sum_{n=1}^{\infty }%
\frac{1}{n!}\left( \widetilde{f}_{X^{\psi }}(\eta )t\right) ^{n}
\label{lap3} \\
&=&[\text{by (\ref{fr})}]  \notag \\
&=&e^{-t}\sum_{n=1}^{\infty }\frac{1}{n!}\left( (1-\psi (\eta )t\right)
^{n}=e^{-\psi (\eta )t}-e^{-t},\qquad \eta >0,\;t\geq 0,  \notag
\end{eqnarray}%
instead of (\ref{lp2}). Correspondingly, as we will prove in the next
section, the equation satisfied by the density $f_{\mathcal{S}_{\psi }}$
differs from (\ref{cau}) by two additional terms, which depend on the choice
of $\psi $ and whether or not the density of the subordinator is infinite in
the origin, for some values of $t$.

In the last section, we extend these results by generalizing the previous
operators to the case of random parameters, thus obtaining distributed-order
convolution operators. We provide the explicit solution of the corresponding
equations, at least under simplifying assumptions.

Finally, in the concluding remarks, we hint some applications of the
obtained results to the risk theory and, in particular, to a continuous-time
model, where the surplus process of the insurance company is modelled by a
compound Poisson process with non-negative, absolutely continuous claim
sizes.

\

We recall the following definitions of well-known special functions that we
will apply later: let $W_{\alpha ,\beta }(x):=\sum_{j=0}^{\infty
}x^{j}/j!\Gamma (\alpha j+\beta ),$ for $x,\alpha ,\beta \in \mathbb{C},$ be
the Wright function and let

\begin{equation}
E_{\alpha ,\beta }^{\gamma }(x):=\sum_{j=0}^{\infty }\frac{x^{j}(\gamma )_{j}%
}{j!\Gamma (\alpha j+\beta )},\qquad \func{Re}(\alpha ),\func{Re}(\beta
)>0,\;\gamma >0,\;x\in \mathbb{R},  \label{ml}
\end{equation}%
where $(\gamma )_{j}:=\gamma (\gamma +1)...(\gamma +j-1),$ $j=0,1,...,$ be
the Prabhakar function (or Mittag-Leffler function with three parameters).
We will denote, for brevity, the function (\ref{ml}), as $E_{\alpha ,\beta
}(x)$ when $\gamma =1$, and as $E_{\alpha }(x)$, when $\gamma =\beta =1.$
Let us recall the following formula for the Laplace transform of (\ref{ml}):%
\begin{equation}
\mathcal{L}\left\{ x^{\beta -1}E_{\alpha ,\beta }^{\gamma }(\lambda
x^{\alpha });\eta \right\} =\frac{\eta ^{\alpha \gamma -\beta }}{(\eta
^{\alpha }-\lambda )^{\gamma }},  \label{ml2}
\end{equation}%
for $\func{Re}(\eta )>0,$ $\func{Re}(\beta )>0,$ $\lambda \in \mathbb{C}$
and $|\lambda \eta ^{-\alpha }|<1$ (see \cite{KIL}, p.47).

\section{Main results}

Let us consider the convolution operator $\mathcal{D}_{x}^{\psi }$ defined
in (\ref{co}) under different assumptions on the kernel $\overline{\mu }%
_{\psi }(\cdot )$: in the exponential case, $\mathcal{D}_{x}^{\psi }$
reduces to the variant of the Caputo-Fabrizio operator defined in (\ref{cf})
and the solution is finite in the origin. Then we analyze two cases, where
the kernel of the operator is represented by a Mittag-Leffler (with
parameter $\nu \in (0,1]$) or an incomplete gamma function (with parameter $%
\rho \in (0,1]$). These extensions are both quite natural, since the
Mittag-Leffler density is the fractional counterpart of the exponential one;
on the other hand, the incomplete gamma function coincides with the tail
distribution function of a gamma random variable, which generalizes the
exponential. As we will see, even though the last two operators reduce to
the first one, for $\nu =1$ and $\rho =1$, respectively, the exponential
case must be treated separately, since the governing equation is not
accordingly obtained as special case. This is a consequence of the different
behavior of the solutions in the origin.

\subsection{Exponential kernel}

\begin{theorem}
Let $^{CF}D_{x}^{\alpha }$ be the convolution operator defined in (\ref{cf}%
), with $B(\alpha )=1-\alpha $. Then the solution to the following equation%
\begin{equation}
\frac{\partial }{\partial t}f(x,t)=-\,^{CF}D_{x}^{\alpha }\,f(x,t)+k_{\alpha
}(1-t)e^{-t-k_{\alpha }x},\qquad x\geq 0,t\geq 0,\text{ }\alpha \in (0,1),
\label{eq}
\end{equation}%
with $k_{\alpha }=\alpha /(1-\alpha )$ and initial condition $f(x,0)=0,$ is
given by
\begin{equation}
f_{\mathcal{S}_{\psi }}(x,t)=k_{\alpha }t\exp \left\{ -k_{\alpha
}x-t\right\} W_{1,2}\left( k_{\alpha }xt\right) ,\qquad x\geq 0,  \label{den}
\end{equation}%
and (\ref{den}) is the density of the absolutely continuous component of $%
\mathcal{S}_{\psi }$ defined in (\ref{ss}), for $X_{j}^{\psi }$
exponentially distributed with parameter $k_{\alpha }$, for $j=1,2...$.
\end{theorem}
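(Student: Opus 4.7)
The first step is to identify the Bernstein function associated to the operator $^{CF}D_x^\alpha$ when $B(\alpha)=1-\alpha$. Comparing (\ref{cf}) with (\ref{co}), the tail Lévy measure is $\overline{\mu}_\psi(x)=e^{-k_\alpha x}$, so that $\mu_\psi(dz)=k_\alpha e^{-k_\alpha z}dz$ is a finite measure of total mass $1$ (matching the Poisson rate $\lambda=1$ in (\ref{ss})). Using (\ref{bb}) with $c=b=0$, a direct integration gives $\psi(\eta)=\eta/(\eta+k_\alpha)$; plugging this into (\ref{fr}) and inverting the Laplace transform yields $F_{X_j^\psi}(x)=1-e^{-k_\alpha x}$, so the jumps are indeed exponential with parameter $k_\alpha$, as claimed.

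Next I would compute $f_{\mathcal{S}_\psi}(x,t)$ explicitly from the series (\ref{lp}). Since the $n$-fold convolution of an exponential of rate $k_\alpha$ is the Erlang density $k_\alpha^n x^{n-1}e^{-k_\alpha x}/(n-1)!$, the series becomes
\begin{equation*}
f_{\mathcal{S}_\psi}(x,t)=k_\alpha t\,e^{-t-k_\alpha x}\sum_{j=0}^{\infty}\frac{(k_\alpha xt)^j}{j!(j+1)!},
\end{equation*}
and the sum is exactly $W_{1,2}(k_\alpha xt)$ by the definition of the Wright function, producing (\ref{den}).

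It then remains to verify that (\ref{den}) solves (\ref{eq}) with the given initial condition. My plan is to Laplace-transform the equation in $x$ using (\ref{lt3}) with $\psi(\eta)/\eta=1/(\eta+k_\alpha)$. Because $W_{1,2}(0)=1/\Gamma(2)=1$, the boundary value is $f(0,t)=k_\alpha t e^{-t}$, which combines with the forcing term $k_\alpha(1-t)e^{-t-k_\alpha x}$ (whose $x$-Laplace transform is $k_\alpha(1-t)e^{-t}/(\eta+k_\alpha)$) to collapse the two $t$-dependent contributions into a single clean source $k_\alpha e^{-t}/(\eta+k_\alpha)$. The resulting first-order linear ODE in $t$,
\begin{equation*}
\tfrac{\partial}{\partial t}\widetilde{f}(\eta,t)+\tfrac{\eta}{\eta+k_\alpha}\widetilde{f}(\eta,t)=\tfrac{k_\alpha e^{-t}}{\eta+k_\alpha},\qquad \widetilde{f}(\eta,0)=0,
\end{equation*}
is solved by the integrating-factor method and gives $\widetilde{f}(\eta,t)=e^{-\eta t/(\eta+k_\alpha)}-e^{-t}=e^{-\psi(\eta)t}-e^{-t}$, in agreement with (\ref{lap3}) and hence with the Laplace transform of (\ref{den}). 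By uniqueness for the Laplace transform, (\ref{den}) solves (\ref{eq}).

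\textbf{Main obstacle.} The technical core is understanding the role of the extra forcing $k_\alpha(1-t)e^{-t-k_\alpha x}$: it is precisely the correction required because the density does not vanish at $x=0$ (so the simpler equation (\ref{cau}) fails) and because the distribution of $\mathcal{S}_\psi(t)$ carries an atom $e^{-t}\delta_0$, which turns the ordinary Laplace exponent $e^{-\psi(\eta)t}$ into $e^{-\psi(\eta)t}-e^{-t}$ in (\ref{lap3}). Once this structural point is recognized, the Laplace-domain verification reduces to a short ODE computation.
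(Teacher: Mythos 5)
Your proposal is correct and follows essentially the same route as the paper: both identify $\psi(\eta)=\eta/(\eta+k_{\alpha})$, work with the Laplace transform in $x$ via (\ref{lt3}) (which is legitimate here precisely because $f(0,t)=k_{\alpha}te^{-t}<\infty$), and match against $\widetilde{f}_{\mathcal{S}_{\psi}}(\eta,t)=e^{-\psi(\eta)t}-e^{-t}$. The only cosmetic differences are that you obtain (\ref{den}) by summing the Erlang convolution series in (\ref{lp}) rather than by inverting (\ref{hh}), and you phrase the verification as solving a first-order ODE in $t$ instead of differentiating (\ref{lap3}) and comparing terms.
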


\begin{proof}
We first prove that the density $f_{\mathcal{S}_{\psi }}$ of the absolutely
continuous component of (\ref{ss}) satisfies the following equation%
\begin{equation}
\frac{\partial }{\partial t}f(x,t)=-\mathcal{D}_{x}^{\psi }f(x,t)-\overline{%
\mu }_{\psi }(x)f(0,t)+f_{X_{j}^{\psi }}(x)e^{-t},\qquad x,t\geq 0.
\label{cau3}
\end{equation}%
Indeed, by taking the time-derivative of (\ref{lap3}), we get%
\begin{equation*}
\frac{\partial }{\partial t}\widetilde{f}_{\mathcal{S}_{\psi }}(\eta
,t)=-\psi (\eta )e^{-\psi (\eta )t}+e^{-t}.
\end{equation*}%
which coincides with the Laplace transform of the r.h.s. of (\ref{cau3}), by
considering (\ref{lt3}) and that $\int\limits_{0}^{+\infty }e^{-\eta x}%
\overline{\mu }_{\psi }(x)dx=\psi (\eta )/\eta $. Equation (\ref{cau3})
obviously holds only when $f(0,t)<\infty ,$ for any $t.$ By definition (\ref%
{cf}) we can write, in this case, $\overline{\mu }(x)=e^{-k_{\alpha }x}$ and
\begin{equation*}
\,\psi (\eta )=\eta \int\limits_{0}^{+\infty }e^{-\eta x-k_{\alpha }x}dx=%
\frac{\eta }{\eta +k_{\alpha }}.
\end{equation*}%
Then equation (\ref{cau3}) coincides with (\ref{eq}) and (\ref{lap3})
reduces to%
\begin{equation}
\widetilde{f}_{\mathcal{S}_{\psi }}(\eta ,t)=e^{-\frac{\eta t}{\eta
+k_{\alpha }}}-e^{-t},  \label{hh}
\end{equation}%
whose inverse is equal to (\ref{den}) and is finite also for $x=0$.
Moreover, by considering (\ref{fr}), we have that $f_{X_{j}^{\psi }}(x)=%
\mathcal{L}^{-1}\{k_{\alpha }/(\eta +k_{\alpha });x\}=k_{\alpha
}e^{-k_{\alpha }x}$, where $\mathcal{L}^{-1}\{\cdot ;x\}$ denotes the
inverse Laplace transform.
\end{proof}

\begin{remark}
This result can be alternatively checked directly, by differentiating (\ref%
{den}) and applying definition (\ref{cf}): indeed we have that%
\begin{eqnarray}
&&^{CF}D_{x}^{\alpha }\,f_{\mathcal{S}_{\psi }}(x,t)  \label{uf2} \\
&=&k_{\alpha }te^{-t-k_{\alpha }x}\sum_{n=1}^{\infty }\frac{(k_{\alpha
}t)^{n}}{(n-1)!(n+1)!}\int_{0}^{x}z^{n-1}dz-k_{\alpha }^{2}te^{-t-k_{\alpha
}x}\sum_{n=0}^{\infty }\frac{(k_{\alpha }t)^{n}}{n!(n+1)!}\int_{0}^{x}z^{n}dz
\notag \\
&=&k_{\alpha }te^{-t-k_{\alpha }x}\sum_{n=1}^{\infty }\frac{(k_{\alpha
}tx)^{n}}{n!(n+1)!}-k_{\alpha }e^{-t-k_{\alpha }x}\sum_{l=1}^{\infty }\frac{%
(k_{\alpha }tx)^{l}}{l!^{2}},  \notag
\end{eqnarray}%
which proves equation (\ref{eq}). Formula (\ref{den}) coincides with the
special case, for $\beta =1,$ of the distribution of the time-fractional
compound Poisson process (with exponential addends) obtained in \cite{BEG}.
Moreover, an alternative expression, in terms of modified Bessel functions,
is given in \cite{ROL}.
\end{remark}

\subsection{Mittag-Leffler kernel}

We now consider the operator defined in (\ref{ab}). Even if the latter
reduce to (\ref{cf}) for $\nu =1,$ the following result holds only for $\nu
<1,$ as explained in the Remark 4 below.

\begin{theorem}
Let $D_{x}^{\alpha ,\nu }$ be the convolution operator (\ref{ab}), with $%
B(\alpha )=1-\alpha $, for $\alpha ,\nu \in (0,1)$ and $k_{\alpha }=\alpha
/(1-\alpha )$. Then the solution of the following equation%
\begin{equation}
\frac{\partial }{\partial t}f(x,t)=-\,D_{x}^{\alpha ,\nu }\,f(x,t)+k_{\alpha
}e^{-t}x^{\nu -1}E_{\nu ,\nu }(-k_{\alpha }x^{\nu }),\qquad x\geq 0,t\geq 0,
\label{abeq}
\end{equation}%
with initial condition $f(x,0)=0$, coincides with the density function
\begin{equation}
f_{\mathcal{S}_{\psi }}(x,t)=\frac{e^{-t}}{x}\sum_{n=1}^{\infty }\frac{%
\left( k_{\alpha }tx^{\nu }\right) ^{n}}{n!}E_{\nu ,\nu n}^{n}(-k_{\alpha
}x^{\nu }).  \label{abden}
\end{equation}
\end{theorem}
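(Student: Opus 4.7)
The plan is to parallel the proof of Theorem~1 but to work entirely at the Laplace-transform level, since for $\nu<1$ the candidate density (\ref{abden}) diverges at the origin and the general identity (\ref{cau3}) cannot be invoked with $f(0,t)$ as a literal value.

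First I would identify the Bernstein function $\psi$ associated with the kernel of $D_x^{\alpha,\nu}$. With $B(\alpha)=1-\alpha$, comparing (\ref{ab}) with the abstract form (\ref{co}) gives $\overline{\mu}_\psi(x)=E_\nu(-k_\alpha x^\nu)$, whose Laplace transform, via (\ref{ml2}) with $\beta=\gamma=1$, equals $\eta^{\nu-1}/(\eta^\nu+k_\alpha)$; hence $\psi(\eta)=\eta^\nu/(\eta^\nu+k_\alpha)$. By (\ref{fr}), the jump density then has Laplace transform $1-\psi(\eta)=k_\alpha/(\eta^\nu+k_\alpha)$, which inverts (again by (\ref{ml2}), this time with $\beta=\nu$) to $f_{X_j^\psi}(x)=k_\alpha x^{\nu-1}E_{\nu,\nu}(-k_\alpha x^\nu)$. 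This is exactly the forcing term in (\ref{abeq}), multiplied by $e^{-t}$.

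Next I would verify (\ref{abden}) directly from (\ref{lp}). Applying (\ref{ml2}) with $\alpha=\nu$, $\beta=\nu n$, $\gamma=n$ and $\lambda=-k_\alpha$ yields $\mathcal{L}\{k_\alpha^n x^{\nu n-1}E_{\nu,\nu n}^n(-k_\alpha x^\nu);\eta\}=k_\alpha^n/(\eta^\nu+k_\alpha)^n=\widetilde{f}_{X_j^\psi}(\eta)^n$, identifying the $n$-fold convolution $f_{X_j^\psi}^{\ast n}$. Substituting into (\ref{lp}) reproduces (\ref{abden}), and summation of the transforms gives $\widetilde{f}_{\mathcal{S}_\psi}(\eta,t)=e^{-t}(\exp(k_\alpha t/(\eta^\nu+k_\alpha))-1)=e^{-\psi(\eta)t}-e^{-t}$, consistent with (\ref{lap3}).

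Finally, differentiation in $t$ gives $\partial_t\widetilde{f}_{\mathcal{S}_\psi}(\eta,t)=-\psi(\eta)e^{-\psi(\eta)t}+e^{-t}=-\psi(\eta)\widetilde{f}_{\mathcal{S}_\psi}(\eta,t)+(1-\psi(\eta))e^{-t}$, which is precisely the Laplace image of the right-hand side of (\ref{abeq}) once one establishes $\mathcal{L}\{D_x^{\alpha,\nu}f_{\mathcal{S}_\psi};\eta\}=\psi(\eta)\widetilde{f}_{\mathcal{S}_\psi}(\eta,t)$, i.e.\ the boundary contribution in (\ref{lt3}) drops out. This is the only delicate point and the expected main obstacle: because $f_{\mathcal{S}_\psi}(0,t)=\infty$, formula (\ref{lt3}) cannot be invoked literally. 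I would resolve it by reading $D_x^{\alpha,\nu}$ through the ``derivative-of-convolution'' form of (\ref{co}), in which $\int_0^x f(z)\overline{\mu}_\psi(x-z)\,dz$ still vanishes at $x=0$ and has Laplace transform $\psi(\eta)\widetilde{f}(\eta)/\eta$, so that its $x$-derivative contributes exactly $\psi(\eta)\widetilde{f}(\eta)$ with no boundary remainder; alternatively one may truncate $f_{\mathcal{S}_\psi}$ to $\{x\ge\varepsilon\}$, apply (\ref{lt3}) for the truncation, and pass $\varepsilon\downarrow 0$. This is precisely the mechanism that breaks down at $\nu=1$, where $f(0,t)<\infty$ and the boundary term reinstates the extra $k_\alpha t e^{-t-k_\alpha x}$ piece of (\ref{eq}), explaining why the exponential case must be treated separately, as anticipated in Remark~4.
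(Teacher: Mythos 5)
Your strategy is the paper's own: identify $\psi (\eta )=\eta ^{\nu }/(\eta ^{\nu }+k_{\alpha })$ from the kernel via (\ref{ml2}), read off the jump density (\ref{xx}) from (\ref{fr}), obtain $\widetilde{f}_{\mathcal{S}_{\psi }}(\eta ,t)=e^{-\psi (\eta )t}-e^{-t}$, and verify (\ref{abeq}) in the Laplace domain by showing that $\mathcal{L}\{D_{x}^{\alpha ,\nu }f_{\mathcal{S}_{\psi }};\eta \}=\psi (\eta )\widetilde{f}_{\mathcal{S}_{\psi }}(\eta ,t)$ with no boundary correction. Your derivation of (\ref{abden}) from (\ref{lp}) via the convolution powers $\widetilde{f}_{X^{\psi }}(\eta )^{n}=k_{\alpha }^{n}/(\eta ^{\nu }+k_{\alpha })^{n}$ is a self-contained replacement for the paper's citation of \cite{BEG}, and your diagnosis of why $\nu =1$ must be excluded is correct.

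The gap is in the one step you yourself flag as delicate: neither of the two mechanisms you offer for discarding the boundary term closes it. The ``derivative-of-convolution'' form $\mathcal{D}_{x}^{\psi }f=\frac{d}{dx}\int_{0}^{x}f(z)\overline{\mu }_{\psi }(x-z)dz-\overline{\mu }_{\psi }(x)f(0)$ is stated in the paper only for $f(0)<\infty $; it is obtained from (\ref{co}) by integration by parts, which is precisely where the boundary term originates, so invoking that form with the $f(0)$ term ``dropped'' assumes the identity you are trying to prove. The truncation alternative fares no better as sketched: applying (\ref{lt3}) on $\{x\geq \varepsilon \}$ reinstates a term proportional to $f_{\mathcal{S}_{\psi }}(\varepsilon ,t)\sim k_{\alpha }te^{-t}\varepsilon ^{\nu -1}/\Gamma (\nu )\rightarrow \infty $ as $\varepsilon \downarrow 0$ (note also that $\partial _{x}f_{\mathcal{S}_{\psi }}\sim cx^{\nu -2}$ is not absolutely integrable at the origin), so the limit requires a cancellation you have not exhibited. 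The paper closes this step by brute force: it differentiates the series (\ref{abden}) termwise to get (\ref{ds}), applies the factorization (\ref{ll}) together with (\ref{ml2}) term by term, and resums to obtain exactly $\frac{\eta ^{\nu }}{\eta ^{\nu }+k_{\alpha }}\left[ e^{-\eta ^{\nu }t/(\eta ^{\nu }+k_{\alpha })}-e^{-t}\right] $ in (\ref{ll3}); Remark 4 then isolates the single term ($n=1$, $j=0$, proportional to $x^{\nu -2}/\Gamma (\nu -1)$) that is present for $\nu <1$ but vanishes at $\nu =1$ and accounts for the extra forcing term in (\ref{eq}). To complete your proof you must either carry out this explicit summation or supply an independent justification that the singular part of $\partial _{x}f_{\mathcal{S}_{\psi }}$ exactly compensates the missing $f(0,t)$ contribution.
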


\begin{proof}
In this case $\overline{\mu }(x)=E_{\nu }\left( -k_{\alpha }x^{\nu }\right) $
and, by considering (\ref{ml2}), we get%
\begin{equation*}
\,\psi (\eta )=\eta \int\limits_{0}^{+\infty }e^{-\eta x}E_{\nu }\left(
-k_{\alpha }x^{\nu }\right) dx=\frac{\eta ^{\nu }}{\eta ^{\nu }+k_{\alpha }}.
\end{equation*}%
Thus, we have that%
\begin{equation}
\widetilde{f}_{\mathcal{S}_{\psi }}(\eta ,t)=e^{-\frac{\eta ^{\nu }t}{\eta
^{\nu }+k_{\alpha }}}-e^{-t}  \label{ml3}
\end{equation}%
The inverse Laplace transform of (\ref{ml3}) gives (\ref{abden}), as can be
easily checked by (\ref{ml2}). It has been proved in \cite{BEG} that (\ref%
{abden}) coincides with the density of the absolutely continuous component
of the compound Poisson process $\mathcal{S}_{\psi }$, under the assumption
that the addends $X_{j}^{\psi }$ are i.i.d. random variables, for $%
j=1,2,..., $ with density function%
\begin{equation}
f_{X_{j}^{\psi }}(x)=k_{\alpha }x^{\nu -1}E_{\nu ,\nu }(-k_{\alpha }x^{\nu
}),\qquad x\geq 0.  \label{xx}
\end{equation}%
In this case we must take into account that, for $\nu <1$, the density (\ref%
{abden}) is infinite when $x=0$; thus we can not apply the Laplace transform
formula (\ref{lt3}); however, for a function $f$ such that the Laplace
transform of the derivative exists, we can write that%
\begin{equation}
\mathcal{L}\{D_{x}^{\alpha ,\nu }\,f(x);\eta \}=\mathcal{L}\{\frac{d}{dx}%
\,f(x);\eta \}\mathcal{L}\{E_{\nu }\left( -k_{\alpha }x^{\nu }\right) ;\eta
\},  \label{ll}
\end{equation}%
by considering (\ref{ab}). The space-derivative of (\ref{abden}) is equal to%
\begin{equation}
\frac{\partial }{\partial x}f_{\mathcal{S}_{\psi
}}(x,t)=e^{-t}\sum_{n=1}^{\infty }\frac{\left( k_{\alpha }t\right) ^{n}}{n!}%
\sum_{j=0}^{\infty }\frac{(n)_{j}(-k_{\alpha })^{j}x^{\nu j+\nu n-2}}{%
j!\Gamma (\nu j+\nu n-1)}  \label{ds}
\end{equation}%
and thus, by (\ref{ll}) and (\ref{ml2}), we get%
\begin{eqnarray}
\mathcal{L}\{D_{x}^{\alpha ,\nu }\,f_{\mathcal{S}_{\psi }}(x);\eta \} &=&%
\frac{\eta ^{\nu -1}e^{-t}}{\eta ^{\nu }+k_{\alpha }}\sum_{n=1}^{\infty }%
\frac{\left( k_{\alpha }t\right) ^{n}}{n!}\sum_{j=0}^{\infty }\frac{%
(n)_{j}(-k_{\alpha })^{j}}{j!\eta ^{\nu j+\nu n-1}}  \label{ll3} \\
&=&\frac{\eta ^{\nu }e^{-t}}{\eta ^{\nu }+k_{\alpha }}\sum_{n=1}^{\infty }%
\frac{\left( k_{\alpha }t/\eta ^{\nu }\right) ^{n}}{n!(n-1)!}%
\sum_{j=0}^{\infty }\frac{(n+j-1)!(-k_{\alpha }/\eta ^{\nu })^{j}}{j!}
\notag \\
&=&\frac{\eta ^{\nu }e^{-t}}{\eta ^{\nu }+k_{\alpha }}\left[ e^{\frac{%
k_{\alpha }}{\eta ^{\nu }+k_{\alpha }}t}-1\right] =\frac{\eta ^{\nu }}{\eta
^{\nu }+k_{\alpha }}\left[ e^{-\frac{\eta ^{\nu }}{\eta ^{\nu }+k_{\alpha }}%
t}-e^{-t}\right] .  \notag
\end{eqnarray}%
Therefore equation (\ref{abeq}) is proved to hold, by considering (\ref{ll3}%
) together with the time-derivative of (\ref{ml3}).
\end{proof}

\begin{remark}
The previous result holds only for $\nu <1$, since formula (\ref{ds}) can be
rewritten as
\begin{equation*}
\frac{\partial }{\partial x}f_{\mathcal{S}_{\psi
}}(x,t)=e^{-t}\sum_{n=1}^{\infty }\frac{\left( k_{\alpha }t\right) ^{n}}{n!}%
\sum_{j=1}^{\infty }\frac{(n)_{j}(-k_{\alpha })^{j}x^{\nu j+\nu n-2}}{%
j!\Gamma (\nu j+\nu n-1)}+e^{-t}\sum_{n=1}^{\infty }\frac{\left( k_{\alpha
}t\right) ^{n}}{n!}\frac{x^{\nu n-2}}{\Gamma (\nu n-1)}
\end{equation*}%
where the first term of the last sum (i.e. for $n=1$) vanishes when $\nu =1.$
Therefore, in this special case, we get%
\begin{eqnarray}
&&\mathcal{L}\{D_{x}^{\alpha ,1}\,f_{\mathcal{S}_{\psi }}(x);\eta \}
\label{ll4} \\
&=&\frac{\eta e^{-t}}{\eta +k_{\alpha }}\sum_{n=1}^{\infty }\frac{\left(
k_{\alpha }t/\eta \right) ^{n}}{n!(n-1)!}\sum_{j=1}^{\infty }\frac{%
(n+j-1)!(-k_{\alpha }/\eta )^{j}}{j!}+\frac{\eta e^{-t}}{\eta +k_{\alpha }}%
\sum_{n=1}^{\infty }\frac{\left( k_{\alpha }t/\eta \right) ^{n}}{n!}  \notag
\\
&=&\frac{\eta e^{-t}}{\eta +k_{\alpha }}\left[ e^{\frac{k_{\alpha }}{\eta
+k_{\alpha }}t}-1\right] -\frac{\eta e^{-t}}{\eta +k_{\alpha }}\left[ e^{%
\frac{k_{\alpha }}{\eta }t}-1\right] +\frac{\eta e^{-t}}{\eta +k_{\alpha }}%
\left[ e^{\frac{k_{\alpha }}{\eta }t}-1-\frac{k_{\alpha }t}{\eta }\right]
\notag \\
&=&\frac{\eta }{\eta +k_{\alpha }}\left[ e^{-\frac{\eta }{\eta +k_{\alpha }}%
t}-e^{-t}\right] -\frac{k_{\alpha }te^{-t}}{\eta +k_{\alpha }},  \notag
\end{eqnarray}%
which coincides with the Laplace transform of (\ref{uf2}), while differs
from (\ref{ll3})$,$ with $\nu =1$.
\end{remark}

\begin{remark}
As for the exponential case, also Theorem 3 can be, alternatively, proved
directly as follows: the time-derivative of (\ref{abden}) reads%
\begin{eqnarray}
&&\frac{\partial }{\partial t}f_{\mathcal{S}_{\psi }}(x,t)  \label{dt2} \\
&=&-\frac{e^{-t}}{x}\sum_{n=1}^{\infty }\frac{\left( k_{\alpha }tx^{\nu
}\right) ^{n}}{n!}\left[ E_{\nu ,\nu n}^{n}(-k_{\alpha }x^{\nu })-k_{\alpha
}x^{\nu }E_{\nu ,\nu (n+1)}^{n+1}(-k_{\alpha }x^{\nu })\right] +  \notag \\
&&+k_{\alpha }e^{-t}x^{\nu -1}E_{\nu ,\nu }(-k_{\alpha }x^{\nu })  \notag \\
&=&-\frac{e^{-t}}{x}\sum_{n=1}^{\infty }\frac{\left( k_{\alpha }tx^{\nu
}\right) ^{n}}{n!}E_{\nu ,\nu n}^{n+1}(-k_{\alpha }x^{\nu })+k_{\alpha
}e^{-t}x^{\nu -1}E_{\nu ,\nu }(-k_{\alpha }x^{\nu }),  \notag
\end{eqnarray}%
where, in the last step, we applied formula (3.6) in \cite{BEG2}, for $m=n+1$
and $z=0.$ By considering (\ref{ds}), together with (\ref{ab}), we get%
\begin{eqnarray}
&&D_{x}^{\alpha ,\nu }\,f_{\mathcal{S}_{\psi }}(x,t)  \label{ds2} \\
&=&e^{-t}\sum_{n=1}^{\infty }\frac{\left( k_{\alpha }t\right) ^{n}}{n!}%
\sum_{j=0}^{\infty }\frac{(n)_{j}(-k_{\alpha })^{j}x^{\nu j+\nu n-1}}{%
j!\Gamma (\nu j+\nu n-1)}\sum_{l=0}^{\infty }\frac{(-k_{\alpha })^{l}x^{\nu
l}}{\Gamma (\nu l+1)}\int_{0}^{1}(1-y)^{\nu j+\nu n-2}y^{\nu l}dy  \notag \\
&=&\frac{e^{-t}}{x}\sum_{n=1}^{\infty }\frac{\left( k_{\alpha }tx^{\nu
}\right) ^{n}}{n!}\sum_{j=0}^{\infty }\frac{(n)_{j}}{j!}\sum_{l=0}^{\infty }%
\frac{(-k_{\alpha }x^{\nu })^{l+j}}{\Gamma (\nu l+\nu j+\nu n)}  \notag \\
&=&\frac{e^{-t}}{x}\sum_{n=1}^{\infty }\frac{\left( k_{\alpha }tx^{\nu
}\right) ^{n}}{n!}\sum_{m=0}^{\infty }\frac{(-k_{\alpha }x^{\nu })^{m}}{%
\Gamma (\nu m+\nu n)}\sum_{j=0}^{m}\binom{n+j-1}{j}  \notag \\
&=&\frac{e^{-t}}{x}\sum_{n=1}^{\infty }\frac{\left( k_{\alpha }tx^{\nu
}\right) ^{n}}{n!}\sum_{m=0}^{\infty }\frac{(n+1)_{m}(-k_{\alpha }x^{\nu
})^{m}}{m!\Gamma (\nu m+\nu n)}=\frac{e^{-t}}{x}\sum_{n=1}^{\infty }\frac{%
\left( k_{\alpha }tx^{\nu }\right) ^{n}}{n!}E_{\nu ,\nu n}^{n+1}(-k_{\alpha
}x^{\nu }),  \notag
\end{eqnarray}%
where $(\gamma )_{j}:=\gamma (\gamma +1)...(\gamma +j-1),$ $j=0,1,...$ The
last step follows by the well-known formula $\sum_{j=0}^{m}\binom{s+j}{j}=%
\binom{s+m+1}{m};$ by considering (\ref{ds2}) with (\ref{dt2}), we obtain (%
\ref{abeq}).
\end{remark}

\subsection{Incomplete-gamma kernel}

Let $\Gamma (\rho ;x):=\int_{x}^{+\infty }e^{-w}w^{\rho -1}dw$ be the
upper-incomplete gamma function (which is defined for any $\rho ,x\in
\mathbb{R}$ and is real-valued for $x\geq 0$).

We now consider the operator defined in (\ref{ig}), for a differentiable
function $f$ in $AC_{loc}\left( 0,+\infty \right) .$ It is immediate to
check that, when $\rho =1$, it reduces to (\ref{cf}). Moreover, for $z=0$,
the kernel is equal to one and thus it is non-singular in the origin. As in
the Mittag-Leffler case, the following result holds only for $\rho <1.$

\begin{theorem}
Let $\mathcal{D}_{x}^{\alpha ,\rho }$ be the convolution operator defined in
(\ref{ig}). Then the solution to the following equation, for $\rho \in
(0,1), $%
\begin{equation}
\frac{\partial }{\partial t}f(x,t)=-\,\mathcal{D}_{x}^{\alpha ,\rho
}\,f(x,t)+\frac{k_{\alpha }^{\rho }x^{\rho -1}}{\Gamma (\rho )}%
e^{-t-k_{\alpha }x},\qquad x\geq 0,t\geq 0,\text{ }\alpha \in (0,1),
\label{ggeq}
\end{equation}%
with initial condition $f(x,0)=0,$ is given by
\begin{equation}
f_{\mathcal{S}_{\psi }}(x,t)=\frac{e^{-t-k_{\alpha }x}}{x}\sum_{n=1}^{\infty
}\frac{\left( k_{\alpha }^{\rho }tx^{\rho }\right) ^{n}}{n!\Gamma (\rho n)}%
,\qquad x,t\geq 0.  \label{gg4}
\end{equation}%
Moreover (\ref{gg4}) is the density of the absolutely continuous component
of $\mathcal{S}_{\psi }$ defined in (\ref{ss}), for $X_{j}^{\psi }$ i.i.d.
random variables with density function%
\begin{equation}
f_{X_{j}^{\psi }}(x)=\frac{k_{\alpha }^{\rho }}{\Gamma (\rho )}x^{\rho
-1}e^{-k_{\alpha }x},\qquad x\geq 0.  \label{gg5}
\end{equation}
\end{theorem}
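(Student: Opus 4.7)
The plan is to mirror the template of Theorems 1 and 3: attach to the kernel in (\ref{ig}) the corresponding Bernstein function $\psi$, read off both the density (\ref{gg4}) and the jump law (\ref{gg5}) from (\ref{lap3}) and (\ref{fr}), and then verify the governing equation (\ref{ggeq}) via Laplace inversion, handling the boundary singularity at $x=0$ as in the proof of Theorem 3.

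First I would note that $\overline{\mu}_{\psi}(x):=\Gamma(\rho;k_{\alpha}x)/\Gamma(\rho)$ is precisely the survival function of a gamma law with shape $\rho$ and rate $k_{\alpha}$, as one sees by the substitution $w=k_{\alpha}y$ in the definition of $\Gamma(\rho;\cdot)$; this identifies the i.i.d.\ addends $X_{j}^{\psi}$ with the density in (\ref{gg5}) through (\ref{fr}). A short calculation then yields $\psi(\eta)=\eta\int_{0}^{+\infty}e^{-\eta x}\overline{\mu}_{\psi}(x)\,dx=1-k_{\alpha}^{\rho}/(\eta+k_{\alpha})^{\rho}$, which, inserted in (\ref{lap3}) and expanded, produces $\widetilde{f}_{\mathcal{S}_{\psi}}(\eta,t)=e^{-t}\sum_{n\ge 1}(k_{\alpha}^{\rho}t)^{n}/[n!(\eta+k_{\alpha})^{\rho n}]$. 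Inverting term by term via $\mathcal{L}^{-1}\{(\eta+k_{\alpha})^{-\rho n};x\}=x^{\rho n-1}e^{-k_{\alpha}x}/\Gamma(\rho n)$ reproduces formula (\ref{gg4}).

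To verify the equation, I would work in the Laplace domain. Differentiating $\widetilde{f}_{\mathcal{S}_{\psi}}$ in $t$ and rearranging gives $\partial_{t}\widetilde{f}_{\mathcal{S}_{\psi}}(\eta,t)=-\psi(\eta)\widetilde{f}_{\mathcal{S}_{\psi}}(\eta,t)+[1-\psi(\eta)]e^{-t}$, and the second term on the right is immediately recognized as the Laplace transform of the forcing $k_{\alpha}^{\rho}x^{\rho-1}e^{-t-k_{\alpha}x}/\Gamma(\rho)$ appearing in (\ref{ggeq}). It then remains to identify $\psi(\eta)\widetilde{f}_{\mathcal{S}_{\psi}}$ with $\mathcal{L}\{\mathcal{D}_{x}^{\alpha,\rho}f_{\mathcal{S}_{\psi}};\eta\}$. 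Since (\ref{gg4}) diverges at $x=0$ for $\rho<1$, the Laplace formula (\ref{lt3}) is not directly available; instead, as in the proof of Theorem 3, I would use that $\mathcal{D}_{x}^{\alpha,\rho}$ is a convolution of $\partial_{x}f$ with $\overline{\mu}_{\psi}$, so that $\mathcal{L}\{\mathcal{D}_{x}^{\alpha,\rho}f_{\mathcal{S}_{\psi}};\eta\}=(\psi(\eta)/\eta)\,\mathcal{L}\{\partial_{x}f_{\mathcal{S}_{\psi}};\eta\}$, and evaluate this last transform term by term from the explicit series.

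The main obstacle lies exactly in that last step: differentiating each summand $x^{\rho n-1}e^{-k_{\alpha}x}$ produces a piece $(\rho n-1)x^{\rho n-2}e^{-k_{\alpha}x}$ which is not locally integrable when $\rho n<1$, so the Laplace integrals of the low-order summands diverge in the naive sense. The remedy, in the spirit of (\ref{ll3}), is to keep the two pieces $(\rho n-1)x^{\rho n-2}e^{-k_{\alpha}x}$ and $-k_{\alpha}x^{\rho n-1}e^{-k_{\alpha}x}$ together and collapse them by the functional equation $(\rho n-1)\Gamma(\rho n-1)=\Gamma(\rho n)$ into the single finite quantity $\eta\Gamma(\rho n)/(\eta+k_{\alpha})^{\rho n}$. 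After this regrouping the series sums to $\eta\widetilde{f}_{\mathcal{S}_{\psi}}(\eta,t)$, hence $\mathcal{L}\{\mathcal{D}_{x}^{\alpha,\rho}f_{\mathcal{S}_{\psi}};\eta\}=\psi(\eta)\widetilde{f}_{\mathcal{S}_{\psi}}(\eta,t)$, and comparison with the time derivative together with uniqueness of Laplace inversion closes the proof.
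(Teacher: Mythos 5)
Your proposal is correct and follows essentially the same route as the paper: it computes $\psi(\eta)=1-k_{\alpha}^{\rho}/(\eta+k_{\alpha})^{\rho}$ from the gamma tail kernel, obtains (\ref{gg4}) and (\ref{gg5}) by termwise Laplace inversion of the exponential expansion of (\ref{lap3}) and of (\ref{fr}), and verifies (\ref{ggeq}) in the Laplace domain through the convolution identity $\mathcal{L}\{\mathcal{D}_{x}^{\alpha,\rho}f;\eta\}=(\psi(\eta)/\eta)\,\mathcal{L}\{\partial_{x}f;\eta\}$, exactly as in the paper's proof. Your pairing of the two pieces of each differentiated summand via $(\rho n-1)\Gamma(\rho n-1)=\Gamma(\rho n)$ is merely a slightly more explicit justification of the step the paper performs formally in passing from (\ref{dx}) to (\ref{uu}).
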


\begin{proof}
We can write, analogously to the previous cases, that $\overline{\mu }%
(x)=\Gamma \left( \rho ;k_{\alpha }x\right) /\Gamma \left( \rho \right) $
and
\begin{eqnarray}
\,\psi (\eta ) &=&\eta \int\limits_{0}^{+\infty }e^{-\eta x}\frac{\Gamma
\left( \rho ;k_{\alpha }x\right) }{\Gamma \left( \rho \right) }dx=\frac{\eta
}{\Gamma \left( \rho \right) }\int\limits_{0}^{+\infty }e^{-\eta
x}\int\limits_{k_{\alpha }x}^{+\infty }e^{-w}w^{\rho -1}dwdx  \label{psi} \\
&=&\frac{\eta k_{\alpha }^{\rho }}{\Gamma \left( \rho \right) }%
\int\limits_{0}^{+\infty }e^{-k_{\alpha }y}y^{\rho
-1}\int\limits_{0}^{y}e^{-\eta x}dxdy=1-\frac{k_{\alpha }^{\rho }}{\left(
\eta +k_{\alpha }\right) ^{\rho }}.  \notag
\end{eqnarray}%
Therefore we have that%
\begin{equation}
\widetilde{f}_{\mathcal{S}_{\psi }}(\eta ,t)=e^{-t+\frac{k_{\alpha }^{\rho }%
}{\left( \eta +k_{\alpha }\right) ^{\rho }}t}-e^{-t}.  \label{loc}
\end{equation}%
By taking the inverse Laplace transform of (\ref{loc}), we get (\ref{gg4}).
The density in (\ref{gg5}) is correspondingly obtained, in view of (\ref{fr}%
), by inverting%
\begin{equation*}
1-\psi (\eta )=\frac{k_{\alpha }^{\rho }}{\left( \eta +k_{\alpha }\right)
^{\rho }}.
\end{equation*}%
As in the previous case, in order to prove that (\ref{gg4}) satisfies
equation (\ref{ggeq}), we can not apply (\ref{lt3}), since also the function
(\ref{gg4}) is infinite in the origin. From (\ref{gg4}), we have that
\begin{equation}
\frac{\partial }{\partial x}f_{\mathcal{S}_{\psi }}(x,t)=-\frac{k_{\alpha
}e^{-t-k_{\alpha }x}}{x}\sum_{n=1}^{\infty }\frac{\left( k_{\alpha }^{\rho
}tx^{\rho }\right) ^{n}}{n!\Gamma (\rho n)}+\frac{e^{-t-k_{\alpha }x}}{x^{2}}%
\sum_{n=1}^{\infty }\frac{\left( k_{\alpha }^{\rho }tx^{\rho }\right) ^{n}}{%
n!\Gamma (\rho n-1)}  \label{dx}
\end{equation}%
and%
\begin{equation*}
\mathcal{L}\{\frac{\partial }{\partial x}f_{\mathcal{S}_{\psi }}(x,t);\eta
\}=\eta \left[ e^{-\frac{\eta ^{\rho }}{\left( \eta +k_{\alpha }\right)
^{\rho }}t}-e^{-t}\right] .
\end{equation*}%
Then, by taking into account the analogue of (\ref{ll}) together with (\ref%
{psi}), we get%
\begin{eqnarray}
\mathcal{L}\{\mathcal{D}_{x}^{\alpha ,\rho }\,f(x,t);\eta \} &=&\frac{1}{%
\Gamma \left( \rho \right) }\mathcal{L}\{\frac{\partial }{\partial x}f_{%
\mathcal{S}_{\psi }}(x,t);\eta \}\mathcal{L}\{\Gamma \left( \rho ;k_{\alpha
}x\right) ;\eta \}  \label{uu} \\
&=&\frac{\left( \eta +k_{\alpha }\right) ^{\rho }-k_{\alpha }^{\rho }}{%
\left( \eta +k_{\alpha }\right) ^{\rho }}\left[ e^{-\frac{\left( \eta
+k_{\alpha }\right) ^{\rho }-k_{\alpha }^{\rho }}{\left( \eta +k_{\alpha
}\right) ^{\rho }}t}-e^{-t}\right] ,  \notag
\end{eqnarray}%
so that (\ref{ggeq}) easily follows, by considering the time-derivative of (%
\ref{loc}).
\end{proof}

\begin{remark}
The previous result holds only for $\rho $ strictly smaller than $1$, since
formula (\ref{dx}) can be rewritten as
\begin{equation*}
\frac{\partial }{\partial x}f_{\mathcal{S}_{\psi }}(x,t)=-\frac{k_{\alpha
}e^{-t-k_{\alpha }x}}{x}\sum_{n=1}^{\infty }\frac{\left( k_{\alpha }^{\rho
}tx^{\rho }\right) ^{n}}{n!\Gamma (\rho n)}+\frac{e^{-t-k_{\alpha }x}}{x^{2}}%
\sum_{n=2}^{\infty }\frac{\left( k_{\alpha }^{\rho }tx^{\rho }\right) ^{n}}{%
n!\Gamma (\rho n-1)}+\frac{k_{\alpha }^{\rho }tx^{\rho -2}e^{-t-k_{\alpha }x}%
}{\Gamma (\rho -1)}
\end{equation*}%
where the last term vanishes when $\rho =1.$ Therefore, in this special
case, we get%
\begin{eqnarray*}
&&\mathcal{L}\{\mathcal{D}_{x}^{\alpha ,1}\,f_{\mathcal{S}_{\psi
}}(x,t);\eta \} \\
&=&-\frac{k_{\alpha }e^{-t}}{\eta +k_{\alpha }}\sum_{n=1}^{\infty }\frac{%
\left( k_{\alpha }t\right) ^{n}}{n!(\eta +k_{\alpha })^{n}}%
+e^{-t}\sum_{n=2}^{\infty }\frac{\left( k_{\alpha }t\right) ^{n}}{n!(\eta
+k_{\alpha })^{n}} \\
&=&\frac{\eta }{\eta +k_{\alpha }}\left[ e^{-\frac{\eta t}{\eta +k_{\alpha }}%
}-e^{-t}\right] -\frac{k_{\alpha }te^{-t}}{\eta +k_{\alpha }}
\end{eqnarray*}%
which coincides with the Laplace transform of (\ref{uf2}), while differs
from (\ref{uu})$,$ with $\rho =1$.
\end{remark}

\section{The distributed case}

We now extend the results of the previous section by generalizing the
kernels to the case of random parameters and obtaining a distributed order
operator. We can provide the explicit solution of the corresponding
equations, at least under simplifying assumptions.

We start by considering the exponential kernel in (\ref{cf}) and by assuming
that $\alpha ,$ instead of being a fixed parameter, is a random variable,
taking values in $(0,1)$, with a given distribution.

\begin{definition}
Let $\alpha $ be a random variable, with values in $\left( 0,1\right) $
almost surely, and let $q:=q(\alpha )$ be its density function. Then, we
define the following convolution operator%
\begin{equation}
^{CF}D_{x}^{q(\alpha )}f(x):=\int\limits_{0}^{x}\frac{d}{dz}f(z)\left(
\int_{0}^{1}e^{-\frac{\alpha }{1-\alpha }(x-z)}q(\alpha )d\alpha \right)
dz,\qquad x>0,  \label{qq}
\end{equation}%
for a differentiable function $f,$ in $AC_{loc}\left( 0,+\infty \right) .$
\end{definition}

It is immediate to see that, in the special case where $q(z)=\delta
(z-\alpha )$ (with $\delta (\cdot )$ denoting the Dirac delta function),
formula (\ref{qq}) reduces to (\ref{cf}) (under the assumption that $%
B(\alpha )=1-\alpha $).

We will assume hereafter that $q$ is such that $\int_{0}^{1}\alpha q(\alpha
)/(1-\alpha )d\alpha <\infty .$ This assumption implies, for example, in the
Beta case, i.e. for $q(\alpha )=\alpha ^{r-1}(1-\alpha )^{s-1}/B(r,s),$ $%
\alpha \in (0,1),$ (where $B(r,s)$ is the Beta function) that we must choose
$s>1.$

\begin{theorem}
Let $^{CF}D_{x}^{q(\alpha )}$ be the convolution operator defined in (\ref%
{qq}); let moreover $A_{q}(x):=\int_{0}^{1}k_{\alpha }e^{-k_{\alpha
}x}q(\alpha )d\alpha $, for $x\geq 0$, $k_{\alpha }=\alpha /(1-\alpha )$ and
$B_{q}(x):=\left[ \int_{0}^{1}k_{\alpha }q(\alpha )d\alpha \right] \left[
\int_{0}^{1}e^{-k_{\alpha }x}q(\alpha )d\alpha \right] $, for $x\geq 0.$Then
the solution to the following equation%
\begin{equation}
\frac{\partial }{\partial t}f(x,t)=-\,^{CF}D_{x}^{q(\alpha
)}\,f(x,t)+e^{-t}A_{q}(x)-te^{-t}B_{q}(x),\qquad x\geq 0,t\geq 0,
\label{qq2}
\end{equation}%
(with initial condition $f(x,0)=0),$ is given by the density of the
absolutely continuous component of $\mathcal{S}_{\psi }$ defined in (\ref{ss}%
), for $X_{j}^{\psi }$ independent and identically distributed r.v.'s, $%
j=1,2...$., with density $f_{X_{j}^{\psi }}(x)=A_{q}(x),$ $x\geq 0.$
\end{theorem}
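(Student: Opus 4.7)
The plan is to reduce (\ref{qq2}) to the general equation (\ref{cau3}) by observing that $^{CF}D_x^{q(\alpha)}$ in (\ref{qq}) is precisely the operator $\mathcal{D}_x^\psi$ of (\ref{co}) with the \emph{mixture} tail L\'evy measure
$$\overline{\mu}_\psi(x) = \int_0^1 e^{-k_\alpha x} q(\alpha)\,d\alpha,$$
obtained by averaging the exponential kernel of Theorem 1 over $\alpha$. First I would verify the validity of the compound-Poisson setup by computing
$$\psi(\eta) = \eta \int_0^\infty e^{-\eta x}\overline{\mu}_\psi(x)\,dx = \eta \int_0^1 \frac{q(\alpha)}{\eta+k_\alpha}\,d\alpha,$$
which is a bounded Bernstein function (tending to $1$ as $\eta\to\infty$), so that the associated L\'evy measure is finite and $\mathcal{S}_\psi$ is a genuine compound Poisson process of the form (\ref{ss}).

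Next I would identify the common jump law through (\ref{fr}). Since
$$1-\psi(\eta) = \int_0^1 \frac{k_\alpha q(\alpha)}{\eta+k_\alpha}\,d\alpha,$$
term-by-term inversion gives $f_{X_j^\psi}(x) = A_q(x)$, as claimed. With this in hand, equation (\ref{cau3}) already reproduces the operator term and the forcing $e^{-t}A_q(x)$ of (\ref{qq2}) immediately; the only remaining task is to evaluate the boundary contribution $-\overline{\mu}_\psi(x) f_{\mathcal{S}_\psi}(0,t)$. Under the integrability hypothesis $\int_0^1 k_\alpha q(\alpha)\,d\alpha<\infty$ we have $A_q(0)<\infty$; moreover, for any density supported on $[0,\infty)$, the $n$-fold self-convolution $A_q^{*n}(0)$ vanishes for every $n\geq 2$, as the defining integral runs over a set of measure zero. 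Hence, from (\ref{lp}), only the $n=1$ summand survives at $x=0$ and
$$f_{\mathcal{S}_\psi}(0,t) = t e^{-t} A_q(0) = t e^{-t}\!\int_0^1 k_\alpha q(\alpha)\,d\alpha,$$
so that $\overline{\mu}_\psi(x) f_{\mathcal{S}_\psi}(0,t) = t e^{-t} B_q(x)$, matching exactly the third term on the right-hand side of (\ref{qq2}).

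The main technical obstacle lies in handling this boundary term cleanly. In Theorem 1 the jump-forcing and boundary corrections combined into the single factor $(1-t)e^{-t-k_\alpha x}$, so the boundary term was never visible on its own; here the two pieces no longer coincide, and one must separately justify finiteness of $A_q(0)$, the vanishing of higher-order convolutions at the origin, and the applicability of (\ref{cau3}) despite $f_{\mathcal{S}_\psi}(0,t)\neq 0$. If the pointwise convolution argument is deemed delicate, the claim admits a direct Laplace-transform verification: since $\mathcal{L}\{A_q;\eta\}=1-\psi(\eta)$ and $\mathcal{L}\{B_q;\eta\}= A_q(0)\,\psi(\eta)/\eta$, applying $\mathcal{L}\{\cdot;\eta\}$ to (\ref{qq2}) together with (\ref{lt3}) reduces everything to the scalar identity $\partial_t \widetilde{f}_{\mathcal{S}_\psi}(\eta,t)=-\psi(\eta)e^{-\psi(\eta)t}+e^{-t}$, which follows at once from (\ref{lap3}).
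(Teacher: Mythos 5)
Your proposal is correct and follows essentially the same route as the paper: identify $\overline{\mu}_\psi(x)=\int_0^1 e^{-k_\alpha x}q(\alpha)\,d\alpha$ and $\psi(\eta)=\int_0^1 \frac{\eta}{k_\alpha+\eta}q(\alpha)\,d\alpha$, invert $1-\psi$ to get $f_{X_j^\psi}=A_q$, compute $f_{\mathcal{S}_\psi}(0,t)=te^{-t}\int_0^1 k_\alpha q(\alpha)\,d\alpha$ from (\ref{lp}) via the vanishing of the higher-order convolutions at the origin, and recognize the boundary term $\overline{\mu}_\psi(x)f_{\mathcal{S}_\psi}(0,t)=te^{-t}B_q(x)$. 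The only cosmetic difference is that you invoke (\ref{cau3}) directly in physical space while the paper carries out the equivalent verification in the Laplace domain (your own fallback argument), so the two proofs coincide in substance.
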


\begin{proof}
In this case we have that $\overline{\mu }(x)=\int_{0}^{1}e^{-k_{\alpha
}x}q(\alpha )d\alpha $; thus we get%
\begin{equation*}
\psi (\eta )=\eta \int\limits_{0}^{+\infty }e^{-\eta
x}\int_{0}^{1}e^{-k_{\alpha }x}q(\alpha )d\alpha dx=\int_{0}^{1}\frac{\eta }{%
k_{\alpha }+\eta }q(\alpha )d\alpha
\end{equation*}%
and
\begin{equation}
\widetilde{f}_{\mathcal{S}_{\psi }}(\eta ,t)=e^{-t\int_{0}^{1}\frac{\eta }{%
k_{\alpha }+\eta }q(\alpha )d\alpha }-e^{-t}.  \label{qq3}
\end{equation}%
The density $f_{X_{j}^{\psi }}$ is obtained, in view of (\ref{fr}), by the
inversion of $1-\,\psi (\eta )=\int_{0}^{1}\frac{k_{\alpha }}{k_{\alpha
}+\eta }q(\alpha )d\alpha ,$ which gives $f_{X_{j}^{\psi
}}(x)=\int_{0}^{1}k_{\alpha }e^{-k_{\alpha }x}q(\alpha )d\alpha .$ Even if,
without any specific assumption on $q(\alpha )$, we can not write an
explicit form for the density $f_{\mathcal{S}_{\psi }}$ (since (\ref{qq3})
can not be inverted), we can nevertheless obtain its value in zero. Indeed,
by (\ref{lp}) and by considering that $f_{X^{\psi }}^{\ast n}(0)=0$, for $%
n>1,$ while $f_{X^{\psi }}^{\ast n}(0)=\int_{0}^{1}k_{\alpha }q(\alpha
)d\alpha $, for $n=1$, we get%
\begin{equation}
f_{\mathcal{S}_{\psi }}(0,t)=te^{-t}\int_{0}^{1}k_{\alpha }q(\alpha )d\alpha
,\qquad t\geq 0.  \label{qqo}
\end{equation}%
By definition (\ref{qq}) we have that%
\begin{eqnarray*}
&&\mathcal{L}\{^{CF}D_{x}^{q(\alpha )}\,f(x);\eta \}=\mathcal{L}%
\{\int_{0}^{1}e^{-k_{\alpha }x}q(\alpha )d\alpha ;\eta \}\mathcal{L}\{\frac{d%
}{dx}f(x);\eta \} \\
&=&\int_{0}^{1}\frac{q(\alpha )}{k_{\alpha }+\eta }d\alpha \left[ \eta
\widetilde{f}(\eta )-f(0)\right] .
\end{eqnarray*}%
In view of (\ref{qq3}) and (\ref{qqo}), we can write%
\begin{eqnarray}
&&\mathcal{L}\{^{CF}D_{x}^{q(\alpha )}\,f(x,t);\eta \}  \label{qqd} \\
&=&\int_{0}^{1}\frac{\eta q(\alpha )}{k_{\alpha }+\eta }d\alpha \left[
e^{-t\int_{0}^{1}\frac{\eta }{k_{\alpha }+\eta }q(\alpha )d\alpha }-e^{-t}%
\right] -te^{-t}\left[ \int_{0}^{1}\frac{q(\alpha )}{k_{\alpha }+\eta }%
d\alpha \right] \left[ \int_{0}^{1}k_{\alpha }q(\alpha )d\alpha \right] .
\notag
\end{eqnarray}%
By comparing (\ref{qqd}) with the time-derivative of (\ref{qq3}), we easily
prove that (\ref{qq2}) holds.
\end{proof}

\

For $q(z)=\delta (z-\alpha )$, we obtain that $A_{q}(x)=B_{q}(x)=k_{\alpha
}e^{-k_{\alpha }x}$ and the previous result coincides with that presented in
Theorem 1. In another special case, i.e. for $q(z)=q_{1}\delta (z-\alpha
_{1})+q_{2}\delta (z-\alpha _{2})$, for $0<\alpha _{1}<\alpha _{2}<1$ and $%
q_{1},q_{2}\in \lbrack 0,1]$ such that $q_{1}+q_{2}=1,$ we can obtain an
explicit form of the density $f_{\mathcal{S}_{\psi }},$ which generalizes (%
\ref{den}).

\begin{corollary}
Let $^{CF}D_{x}^{\alpha _{1},\alpha _{2}}$ be the convolution operator
defined in (\ref{qq}) under the assumption that $q(z)=q_{1}\delta (z-\alpha
_{1})+q_{2}\delta (z-\alpha _{2})$, $z\geq 0,$ for $0<\alpha _{1}<\alpha
_{2}<2$ and $q_{1},q_{2}\in \lbrack 0,1]$ such that $q_{1}+q_{2}=1$ $.$Then
the solution to the following equation%
\begin{eqnarray}
\frac{\partial }{\partial t}f(x,t) &=&-\,^{CF}D_{x}^{\alpha _{1},\alpha
_{2}}\,f(x,t)+[q_{1}k_{\alpha _{1}}e^{-t-k_{\alpha _{1}}x}+q_{2}k_{\alpha
_{2}}e^{-t-k_{\alpha _{2}}x}]+  \label{qqeq} \\
&&-t(q_{1}k_{\alpha _{1}}+q_{2}k_{\alpha _{2}})[q_{1}e^{-t-k_{\alpha
_{1}}x}+q_{2}e^{-t-k_{\alpha _{2}}x}],  \notag
\end{eqnarray}%
$x\geq 0,t\geq 0,$ (with initial condition $f(x,0)=0),$ is given by%
\begin{equation}
f_{\mathcal{S}_{\psi }}(x,t)=\frac{e^{-t-k_{\alpha _{2}}x}}{x}%
\sum_{n=1}^{\infty }\frac{(q_{2}k_{\alpha _{2}}xt)^{n}}{n!}\sum_{j=0}^{n}%
\binom{n}{j}\left( \frac{q_{1}k_{\alpha _{1}}}{q_{2}k_{\alpha _{2}}}\right)
^{j}E_{1,n}^{j}((k_{\alpha _{2}}-k_{\alpha _{1}})x),\qquad x\geq 0.
\label{qqr}
\end{equation}%
Moreover (\ref{qqr}) is the density of the absolutely continuous component
of $\mathcal{S}_{\psi }$ defined in (\ref{ss}), for $X_{j}^{\psi }$
independent and identically distributed r.v.'s, $j=1,2...$., with density $%
f_{X_{j}^{\psi }}(x)=q_{1}k_{\alpha _{1}}e^{-k_{\alpha
_{1}}x}+q_{2}k_{\alpha _{2}}e^{-k_{\alpha _{2}}x},$ $x\geq 0.$
\end{corollary}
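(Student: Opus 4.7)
The plan is to specialise Theorem~7 to the two-atom mixing law $q(\alpha)=q_1\delta(\alpha-\alpha_1)+q_2\delta(\alpha-\alpha_2)$ and then to invert the resulting Laplace transform term-by-term to obtain the explicit series (\ref{qqr}). First I would substitute this $q$ into the general formulas of Theorem~7: the quantities $A_q$ and $B_q$ collapse to
\begin{equation*}
A_q(x)=q_1k_{\alpha_1}e^{-k_{\alpha_1}x}+q_2k_{\alpha_2}e^{-k_{\alpha_2}x},\qquad B_q(x)=(q_1k_{\alpha_1}+q_2k_{\alpha_2})(q_1e^{-k_{\alpha_1}x}+q_2e^{-k_{\alpha_2}x}).
\end{equation*}
Inserting these into (\ref{qq2}) and distributing the factors $e^{-t}$ and $te^{-t}$ reproduces exactly the inhomogeneous term of (\ref{qqeq}); the statement on the jump density $f_{X_j^\psi}$ is immediate from the same theorem, since $1-\psi(\eta)=q_1k_{\alpha_1}/(\eta+k_{\alpha_1})+q_2k_{\alpha_2}/(\eta+k_{\alpha_2})$ inverts to a two-exponential mixture. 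This part is purely bookkeeping.

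The substantive step is the identification of (\ref{qqr}). I would rewrite $\eta/(k_\alpha+\eta)=1-k_\alpha/(k_\alpha+\eta)$, use $q_1+q_2=1$, and specialise (\ref{qq3}) to obtain
\begin{equation*}
\widetilde{f}_{\mathcal{S}_\psi}(\eta,t)=e^{-t}\left[\exp\!\left(t\left(\frac{q_1k_{\alpha_1}}{\eta+k_{\alpha_1}}+\frac{q_2k_{\alpha_2}}{\eta+k_{\alpha_2}}\right)\right)-1\right].
\end{equation*}
Expanding the outer exponential in a Taylor series in $t$ and applying the binomial theorem to each power yields
\begin{equation*}
\widetilde{f}_{\mathcal{S}_\psi}(\eta,t)=e^{-t}\sum_{n=1}^{\infty}\frac{t^n}{n!}\sum_{j=0}^{n}\binom{n}{j}\frac{(q_1k_{\alpha_1})^j(q_2k_{\alpha_2})^{n-j}}{(\eta+k_{\alpha_1})^j(\eta+k_{\alpha_2})^{n-j}}.
\end{equation*}

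The crucial inversion concerns the rational factor. My approach is to shift $\eta'=\eta+k_{\alpha_2}$, so that the denominator becomes $(\eta')^{n-j}\bigl(\eta'-(k_{\alpha_2}-k_{\alpha_1})\bigr)^{j}$, which matches the right-hand side of the Prabhakar Laplace formula (\ref{ml2}) with $\alpha=1$, $\gamma=j$, $\beta=n$ and $\lambda=k_{\alpha_2}-k_{\alpha_1}$. Combined with the standard exponential shift property, this gives
\begin{equation*}
\mathcal{L}^{-1}\!\left\{\frac{1}{(\eta+k_{\alpha_1})^j(\eta+k_{\alpha_2})^{n-j}};x\right\}=e^{-k_{\alpha_2}x}\,x^{n-1}\,E_{1,n}^{j}\!\bigl((k_{\alpha_2}-k_{\alpha_1})x\bigr).
\end{equation*}
Substituting back into the double sum and factoring out $e^{-k_{\alpha_2}x}/x$ together with $(q_2k_{\alpha_2}xt)^n$ from the inner sum produces (\ref{qqr}).

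I expect the main obstacle to be precisely this Laplace inversion: recognising that the correct shift recasts the mixed two-pole denominator into the single-pole form covered by (\ref{ml2}) is the only non-mechanical move; everything else is routine power-series manipulation and direct specialisation of Theorem~7. Note that the roles of $k_{\alpha_1}$ and $k_{\alpha_2}$ are interchangeable (one could equally shift by $k_{\alpha_1}$ and obtain a symmetrical representation), so the apparent asymmetry of (\ref{qqr}) reflects a choice of bookkeeping rather than of substance.
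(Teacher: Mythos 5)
Your proposal is correct and follows essentially the same route as the paper: the specialisation of Theorem~7 giving the inhomogeneous term and the jump density, the binomial expansion of $\bigl(q_1k_{\alpha_1}/(\eta+k_{\alpha_1})+q_2k_{\alpha_2}/(\eta+k_{\alpha_2})\bigr)^n$, and the key use of (\ref{ml2}) with $\alpha=1$ together with the shift $\eta\mapsto\eta+k_{\alpha_2}$ are exactly the ingredients of the paper's argument. The only (immaterial) difference is direction: you invert (\ref{qq3}) term by term to derive (\ref{qqr}), whereas the paper takes the Laplace transform of the given series (\ref{qqr}) and collapses it back to (\ref{qq3}).
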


\begin{proof}
Equation (\ref{qqeq}) is obtained as special case of (\ref{qq2}). We only
need to prove that (\ref{qqr}) satisfies equation (\ref{qqeq}), by checking
that its Laplace transform coincides with (\ref{qq3}), as follows%
\begin{eqnarray*}
\widetilde{f}_{\mathcal{S}_{\psi }}(\eta ,t) &=&e^{-t}\sum_{n=1}^{\infty }%
\frac{(q_{2}k_{\alpha _{2}}t)^{n}}{n!}\sum_{j=0}^{n}\binom{n}{j}\left( \frac{%
q_{1}k_{\alpha _{1}}}{q_{2}k_{\alpha _{2}}}\right) ^{j}\int_{0}^{+\infty
}e^{-(k_{\alpha _{2}}+\eta )x}x^{n-1}E_{1,n}^{j}((k_{\alpha _{2}}-k_{\alpha
_{1}})x)dx \\
&=&e^{-t}\sum_{n=1}^{\infty }\frac{(q_{2}k_{\alpha _{2}}t)^{n}}{n!}%
\sum_{j=0}^{n}\binom{n}{j}\left( \frac{q_{1}k_{\alpha _{1}}}{q_{2}k_{\alpha
_{2}}}\right) ^{j}\frac{(k_{\alpha _{2}}+\eta )^{j-n}}{(k_{\alpha _{1}}+\eta
)^{j}} \\
&=&e^{-t}\sum_{n=1}^{\infty }\frac{(q_{2}k_{\alpha _{2}}t/(k_{\alpha
_{1}}+\eta ))^{n}}{n!}\left( \frac{q_{1}k_{\alpha _{1}}}{q_{2}k_{\alpha _{2}}%
}+\frac{k_{\alpha _{1}}+\eta }{k_{\alpha _{2}}+\eta }\right) ^{n} \\
&=&e^{-t}\left[ \exp \left\{ \frac{k_{\alpha _{1}}k_{\alpha _{2}}+\eta
(q_{1}k_{\alpha _{1}}+q_{2}k_{\alpha _{2}})}{(k_{\alpha _{1}}+\eta
)(k_{\alpha _{2}}+\eta )}t\right\} -1\right] .
\end{eqnarray*}%
The previous expression coincides with (\ref{qq3}), by the assumption on $q$
and by considering that $q_{1}+q_{2}=1$.

\
\end{proof}

In the case of the Mittag-Leffler kernel, we generalize the operator (\ref%
{ab}) to the distributed case, as follows.

\begin{definition}
Let $\nu $ be a random variable, with values in $\left( 0,1\right) $ almost
surely, and let $q:=q(\nu ),$ be its density function. Then, we define the
following convolution operator
\end{definition}

\begin{equation}
D_{x}^{\alpha ,q(\nu )}f(x):=\int\limits_{0}^{x}\frac{d}{dz}f(z)\left[
\int_{0}^{1}E_{\nu }\left( -k_{\alpha }(x-z)^{\nu }\right) q(\nu )d\nu %
\right] dz,\qquad x>0,\text{ }\alpha \in (0,1),  \label{nu}
\end{equation}%
for a differentiable function $f$ in $AC_{loc}\left( 0,+\infty \right) .$

\begin{theorem}
Let $D_{x}^{\alpha ,q(\nu )}$ be the convolution operator defined in (\ref%
{nu}); then the solution to the following equation%
\begin{equation}
\frac{\partial }{\partial t}f(x,t)=-\,D_{x}^{\alpha ,q(\nu
)}\,f(x,t)+k_{\alpha }e^{-t}\int_{0}^{1}x^{\nu -1}E_{\nu ,\nu }(-k_{\alpha
}x^{\nu })q(\nu )d\nu ,\qquad x\geq 0,t\geq 0,  \label{eq3}
\end{equation}%
(with initial condition $f(x,0)=0),$ is given by%
\begin{equation}
f_{\mathcal{S}_{\psi }}(x,t)=\frac{e^{-t}}{x}\sum_{n=1}^{\infty }\frac{%
\left( k_{\alpha }t\right) ^{n}}{n!}\int_{0}^{1}x^{\nu }E_{\nu ,\nu
n}^{n}(-k_{\alpha }x^{\nu })q(\nu )d\nu .  \label{den3}
\end{equation}%
Moreover, (\ref{den3}) is the density of the absolutely continuous component
of $\mathcal{S}_{\psi }$ defined in (\ref{ss}), for $X_{j}^{\psi }$
independent and identically distributed r.v.'s, $j=1,2...$., with density $%
f_{X_{j}^{\psi }}(x)=k_{\alpha }\int_{0}^{1}x^{\nu -1}E_{\nu ,\nu }\left(
-k_{\alpha }x^{\nu }\right) q(\nu )d\nu ,$ $x\geq 0.$
\end{theorem}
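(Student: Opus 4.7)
The plan is to follow exactly the same pattern as the proof of Theorem 3, replacing every $\nu$-dependent quantity by its $q$-average. First, I would read off from the definition (\ref{nu}) that the tail L\'evy measure is $\overline{\mu}_{\psi}(x)=\int_{0}^{1}E_{\nu}(-k_{\alpha}x^{\nu})\,q(\nu)\,d\nu$, and use Fubini together with the $\beta=\gamma=1$ case of (\ref{ml2}) (i.e.\ $\mathcal{L}\{E_{\nu}(-k_{\alpha}x^{\nu});\eta\}=\eta^{\nu-1}/(\eta^{\nu}+k_{\alpha})$) to obtain the associated Bernstein function
\[
\psi(\eta)=\int_{0}^{1}\frac{\eta^{\nu}}{\eta^{\nu}+k_{\alpha}}\,q(\nu)\,d\nu, \qquad 1-\psi(\eta)=\int_{0}^{1}\frac{k_{\alpha}}{\eta^{\nu}+k_{\alpha}}\,q(\nu)\,d\nu.
\]

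Second, I would identify the density of the jumps by Laplace inversion of $1-\psi(\eta)$ via (\ref{fr}): applying (\ref{ml2}) with $\alpha=\beta=\nu$, $\gamma=1$ yields $\mathcal{L}^{-1}\{1/(\eta^{\nu}+k_{\alpha});x\}=x^{\nu-1}E_{\nu,\nu}(-k_{\alpha}x^{\nu})$, and the claimed form of $f_{X_{j}^{\psi}}$ follows by Fubini against $q(\nu)\,d\nu$. Plugging $\psi$ into (\ref{lap3}) then gives $\widetilde{f}_{\mathcal{S}_{\psi}}(\eta,t)=e^{-t\psi(\eta)}-e^{-t}$; inverting termwise using (\ref{ml2}) again with $\beta=\nu n$, $\gamma=n$ (so that $\mathcal{L}^{-1}\{1/(\eta^{\nu}+k_{\alpha})^{n};x\}=x^{\nu n-1}E_{\nu,\nu n}^{n}(-k_{\alpha}x^{\nu})$) and then integrating against $q(\nu)$ reproduces the series in (\ref{den3}).

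Third, I would verify the governing equation (\ref{eq3}) by transposing the computation that led to (\ref{ll3}). Since the density (\ref{den3}) is in general infinite at $x=0$ whenever the support of $q$ meets $(0,1)$, formula (\ref{lt3}) is unavailable; instead I would use the product-form Laplace transform
\[
\mathcal{L}\{D_{x}^{\alpha,q(\nu)}f(x);\eta\}=\mathcal{L}\{f'(x);\eta\}\cdot\int_{0}^{1}\frac{\eta^{\nu-1}}{\eta^{\nu}+k_{\alpha}}\,q(\nu)\,d\nu,
\]
differentiate (\ref{den3}) in $x$ term by term, take Laplace transforms, and simplify by exchanging the $n$-sum, the Prabhakar series, and the $\nu$-integral. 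The resulting expression should match $-\partial_{t}\widetilde{f}_{\mathcal{S}_{\psi}}(\eta,t)+k_{\alpha}e^{-t}\int_{0}^{1}\eta^{\nu-1}/(\eta^{\nu}+k_{\alpha})\,q(\nu)\,d\nu$, which is the Laplace transform of the right-hand side of (\ref{eq3}).

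The main obstacle will be the combinatorial bookkeeping in the last step: three nested summations/integrations (over $n$, over the Prabhakar index $j$, and over $\nu\in(0,1)$ against $q$) must be rearranged, and the Prabhakar identity (3.6) of \cite{BEG2} invoked inside the $\nu$-integral, as was done in (\ref{ds2}) of Remark 5. For fixed $x>0$ every kernel is bounded and every series is absolutely convergent, so Fubini and dominated convergence pose no difficulty; as in Theorem 3, the only genuinely delicate point is the singular behaviour at $x=0$, which forces the workaround based on the convolution rule for $\mathcal{L}\{f'\}$ rather than on (\ref{lt3}), and must be checked uniformly in the integration variable $\nu$.
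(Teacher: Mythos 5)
Your overall route is the paper's route: the paper's proof of this theorem is a two-line ``following the same lines of the non-distributed case'' argument that computes $\psi(\eta)=\int_{0}^{1}\frac{\eta ^{\nu }}{\eta ^{\nu }+k_{\alpha }}q(\nu )d\nu$, writes $\widetilde{f}_{\mathcal{S}_{\psi }}(\eta ,t)=e^{-t\psi (\eta )}-e^{-t}$, and checks the governing equation in Laplace space using the product form $\mathcal{L}\{D_{x}^{\alpha ,q(\nu )}f\}=\mathcal{L}\{f^{\prime }\}\cdot \mathcal{L}\{\overline{\mu }\}$ exactly as you propose. Your identification of the jump density by inverting $1-\psi (\eta )=\int_{0}^{1}\frac{k_{\alpha }}{\eta ^{\nu }+k_{\alpha }}q(\nu )d\nu$ and your remark that (\ref{lt3}) must be replaced by the convolution rule because of the singularity at the origin are both correct and match the paper.

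There is, however, one step in your plan that fails as stated: the claim that termwise inversion of $e^{-t\psi (\eta )}-e^{-t}$ via (\ref{ml2}) ``reproduces the series in (\ref{den3})''. Expanding gives
\begin{equation*}
e^{-t\psi (\eta )}-e^{-t}=e^{-t}\sum_{n=1}^{\infty }\frac{t^{n}}{n!}\Bigl( \int_{0}^{1}\frac{k_{\alpha }}{\eta ^{\nu }+k_{\alpha }}q(\nu )d\nu \Bigr)^{n},
\end{equation*}
and the $n$-th \emph{power of the $\nu $-integral} is not the $\nu $-integral of $k_{\alpha }^{n}(\eta ^{\nu }+k_{\alpha })^{-n}$ unless $q$ is a point mass. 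Its inverse Laplace transform is therefore the $n$-fold integral over $(0,1)^{n}$ of the convolution $f_{\nu _{1}}\ast \cdots \ast f_{\nu _{n}}$ against $q(\nu _{1})\cdots q(\nu _{n})\,d\nu _{1}\cdots d\nu _{n}$ (as it must be, since the jumps are i.i.d.\ with the mixture density), not the single $\nu $-integral appearing in (\ref{den3}); by Jensen's inequality the two objects genuinely differ for non-degenerate $q$. To be fair, this is a discrepancy already present in the theorem's statement --- formula (\ref{den3}) is the $q$-mixture of the fixed-$\nu $ densities (\ref{abden}), whereas $e^{-t\psi }-e^{-t}$ and the stated jump law describe a different process --- and the paper's own proof silently avoids the issue by never performing the inversion. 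But since your write-up explicitly asserts that the inversion yields (\ref{den3}), you would need either to restrict to $q$ a (mixture of) Dirac masses, or to replace (\ref{den3}) by the multiple-integral expression; the rest of your argument (the verification of (\ref{eq3}) in Laplace space) is unaffected and goes through.
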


\begin{proof}
Following the same lines of the non-distributed case, we can write that%
\begin{equation}
\widetilde{f}_{\mathcal{S}_{\psi }}(\eta ,t)=e^{-t\int_{0}^{1}\frac{\eta
^{\nu }}{\eta ^{\nu }+k_{\alpha }}q(\nu )d\nu }-e^{-t}  \notag
\end{equation}%
and
\begin{equation*}
\mathcal{L}\{\mathcal{D}_{x}^{\alpha ,q(\nu )}\,f_{\mathcal{S}_{\psi
}}(x,t);\eta \}=\int_{0}^{1}\frac{\eta ^{\nu -1}}{\eta ^{\nu }+k_{\alpha }}%
q(\nu )d\nu \left[ e^{-t\int_{0}^{1}\frac{\eta ^{\nu }}{\eta ^{\nu
}+k_{\alpha }}q(\nu )d\nu }-e^{-t}\right] ,
\end{equation*}%
so that equation (\ref{eq3}) easily follows.
\end{proof}

\

As far as the incomplete-gamma kernel case is concerned, we can extend the
results of section 2.3 by considering the operator defined in the following

\begin{definition}
Let $\rho $ be a random variable, with values in $\left( 0,1\right) $ almost
surely, and let $q:=q(\rho ),$ be its density function. Then, we define the
following convolution operator%
\begin{equation}
\mathcal{D}_{x}^{\alpha ,q(\rho )}f(x):=\int\limits_{0}^{x}\frac{d}{dz}%
f(z)\left( \int_{0}^{1}\frac{\Gamma \left( \rho ;k_{\alpha }z\right) }{%
\Gamma (\rho )}q(\rho )d\rho \right) dz,\qquad x>0,\text{ }\alpha \in (0,1),
\label{qr}
\end{equation}%
for a differentiable function $f$ in $AC_{loc}\left( 0,+\infty \right) .$
\end{definition}

Then, in this case, we prove the following

\begin{theorem}
Let $\mathcal{D}_{x}^{\alpha ,q(\rho )}$ be the convolution operator (\ref%
{qr}); then the solution to the following equation%
\begin{equation}
\frac{\partial }{\partial t}f(x,t)=-\,\mathcal{D}_{x}^{\alpha ,q(\rho
)}\,f(x,t)+e^{-t-k_{\alpha }x}\int_{0}^{1}\frac{k_{\alpha }^{\rho }x^{\rho
-1}}{\Gamma (\rho )}q(\rho )d\rho ,\qquad x\geq 0,t\geq 0,
\end{equation}%
(with initial condition $f(x,0)=0),$ is given by the density of the
absolutely continuous component of $\mathcal{S}_{\psi }$ defined in (\ref{ss}%
), for $X_{j}^{\psi }$ independent and identically distributed r.v.'s, $%
j=1,2...$., with density $f_{X_{j}^{\psi }}(x)=e^{-k_{\alpha }x}\int_{0}^{1}%
\frac{k_{\alpha }^{\rho }x^{\rho -1}}{\Gamma (\rho )}q(\rho )d\rho ,$ $x\geq
0.$
\end{theorem}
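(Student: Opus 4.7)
The plan is to follow the pattern established in Theorems 5, 7, and 9. The first step is to compute the Bernstein function $\psi$ associated with the kernel $\overline{\mu}(x) = \int_0^1 \Gamma(\rho;k_\alpha x)/\Gamma(\rho)\, q(\rho)\,d\rho$: by Fubini and formula (\ref{psi}) applied for each fixed $\rho$,
\[
\psi(\eta) = \eta \int_0^{+\infty} e^{-\eta x}\overline{\mu}(x)\,dx \;=\; 1 - \int_0^1 \frac{k_\alpha^\rho}{(\eta+k_\alpha)^\rho}\, q(\rho)\, d\rho.
\]
Inserting this into (\ref{lap3}) yields $\widetilde{f}_{\mathcal{S}_\psi}(\eta,t) = \exp\{ t\int_0^1 k_\alpha^\rho/(\eta+k_\alpha)^\rho\, q(\rho)\,d\rho - t\} - e^{-t}$. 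Moreover, by (\ref{fr}), the Laplace transform of $f_{X_j^\psi}$ is $1-\psi(\eta) = \int_0^1 k_\alpha^\rho/(\eta+k_\alpha)^\rho\, q(\rho)\, d\rho$, whose termwise inversion (using the standard Gamma-density Laplace pair, a specialization of (\ref{ml2})) reproduces the claimed $f_{X_j^\psi}(x) = e^{-k_\alpha x}\int_0^1 k_\alpha^\rho x^{\rho-1}/\Gamma(\rho)\, q(\rho)\, d\rho$.

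The second step is to verify the equation by matching Laplace transforms. Differentiating $\widetilde{f}_{\mathcal{S}_\psi}(\eta,t) = e^{-\psi(\eta)t} - e^{-t}$ in $t$ gives
\[
\partial_t\, \widetilde{f}_{\mathcal{S}_\psi}(\eta,t) \;=\; -\psi(\eta)\,\widetilde{f}_{\mathcal{S}_\psi}(\eta,t) + (1-\psi(\eta))\,e^{-t}.
\]
The first summand is the Laplace transform of $-\mathcal{D}_x^{\alpha,q(\rho)} f_{\mathcal{S}_\psi}(x,t)$: by the convolution-product rule (the analogue of (\ref{ll})) applied to (\ref{qr}), one has $\mathcal{L}\{\mathcal{D}_x^{\alpha,q(\rho)} f;\eta\} = \mathcal{L}\{\partial_x f;\eta\}\cdot \int_0^1 \mathcal{L}\{\Gamma(\rho;k_\alpha x)/\Gamma(\rho);\eta\}\, q(\rho)\, d\rho$, and the last integral equals $\psi(\eta)/\eta$ by (\ref{psi}) together with Fubini. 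The second summand equals $\mathcal{L}\{e^{-t} f_{X_j^\psi}(x);\eta\}$, which is exactly the Laplace transform of the forcing term in the displayed equation.

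The main obstacle is the singularity phenomenon already encountered in Theorems 5 and 9: whenever $q$ puts positive mass on $\rho<1$, the density $f_{\mathcal{S}_\psi}(x,t)$ is infinite at $x=0$, so formula (\ref{lt3}) cannot be invoked directly, and the identity $\mathcal{L}\{\partial_x f_{\mathcal{S}_\psi};\eta\} = \eta\, \widetilde{f}_{\mathcal{S}_\psi}(\eta,t)$ needs justification without a boundary term. One route is to argue termwise from the series (\ref{lp}) using the Laplace transforms of the convolution powers of $f_{X_j^\psi}$; a cleaner route, paralleling the proof of Theorem 5, is to absorb the formal boundary contribution into the inhomogeneous term $e^{-t}f_{X_j^\psi}(x)$, which is precisely why no extra $-t e^{-t} B_q$-type correction appears here (contrast Theorem 7, where $f(0,t)$ is finite and produces such a correction). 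Once this point is settled, the Laplace-transform identification is routine.
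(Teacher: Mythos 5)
Your proposal is correct and takes essentially the same route as the paper's proof: identify $\psi(\eta)=1-\int_0^1 k_\alpha^{\rho}(\eta+k_\alpha)^{-\rho}\,q(\rho)\,d\rho$, write $\widetilde{f}_{\mathcal{S}_{\psi}}(\eta,t)=e^{-\psi(\eta)t}-e^{-t}$, and match its time-derivative against the Laplace transform of the right-hand side via the convolution-product rule for the operator. You are in fact more explicit than the paper (which only says ``analogously to the non-distributed case'') about why $\mathcal{L}\{\partial_x f_{\mathcal{S}_{\psi}};\eta\}=\eta\,\widetilde{f}_{\mathcal{S}_{\psi}}(\eta,t)$ carries no boundary term despite the density being infinite at $x=0$, which is exactly the point that distinguishes this case from the distributed exponential one.
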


\begin{proof}
Analogously to the non-distributed case, we can write that%
\begin{equation}
\widetilde{f}_{\mathcal{S}_{\psi }}(\eta ,t)=e^{-t\int_{0}^{1}\frac{\left(
\eta +k_{\alpha }\right) ^{\rho }-k_{\alpha }^{\rho }}{\left( \eta
+k_{\alpha }\right) ^{\rho }}q(\rho )d\rho }-e^{-t}  \notag
\end{equation}%
and
\begin{equation*}
\mathcal{L}\{\mathcal{D}_{x}^{\alpha ,\rho }\,f_{\mathcal{S}_{\psi
}}(x,t);\eta \}=\int_{0}^{1}\frac{\left( \eta +k_{\alpha }\right) ^{\rho
}-k_{\alpha }^{\rho }}{\left( \eta +k_{\alpha }\right) ^{\rho }}q(\rho
)d\rho \left[ e^{-t\int_{0}^{1}\frac{\left( \eta +k_{\alpha }\right) ^{\rho
}-k_{\alpha }^{\rho }}{\left( \eta +k_{\alpha }\right) ^{\rho }}q(\rho
)d\rho }-e^{-t}\right] .
\end{equation*}%
On the other hand,
\begin{equation*}
\frac{\partial }{\partial t}\widetilde{f}_{\mathcal{S}_{\psi }}(\eta
,t)=-e^{-t\int_{0}^{1}\frac{\left( \eta +k_{\alpha }\right) ^{\rho
}-k_{\alpha }^{\rho }}{\left( \eta +k_{\alpha }\right) ^{\rho }}q(\rho
)d\rho }\int_{0}^{1}\frac{\left( \eta +k_{\alpha }\right) ^{\rho }-k_{\alpha
}^{\rho }}{\left( \eta +k_{\alpha }\right) ^{\rho }}q(\rho )d\rho +e^{-t}.
\end{equation*}
\end{proof}

As for the exponential kernel case, when $q(z)=q_{1}\delta (z-\rho
_{1})+q_{2}\delta (z-\rho _{2})$, for $0<\rho _{1}<\rho _{2}<1,$ we can
obtain an explicit form of the density $f_{\mathcal{S}_{\psi }},$ which
generalizes (\ref{gg4}).

\begin{corollary}
Let $\mathcal{D}_{x}^{\alpha ,q(\rho )}$ be the convolution operator defined
in (\ref{qr}) under the assumption that $q(z)=q_{1}\delta (z-\rho
_{1})+q_{2}\delta (z-\rho _{2})$, for $0<\rho _{1}<\rho _{2}<1$ and $%
q_{1},q_{2}\in \lbrack 0,1]$ such that $q_{1}+q_{2}=1$. Then the solution to
the following equation%
\begin{equation}
\frac{\partial }{\partial t}f(x,t)=-\,\mathcal{D}_{x}^{\alpha ,q(\rho
)}\,f(x,t)+e^{-t-k_{\alpha }x}\left[ \frac{q_{1}k_{\alpha }^{\rho
_{1}}x^{\rho _{1}-1}}{\Gamma (\rho _{1})}+\frac{q_{2}k_{\alpha }^{\rho
_{2}}x^{\rho _{2}-1}}{\Gamma (\rho _{2})}\right] ,  \notag
\end{equation}%
$x\geq 0,t\geq 0,$ (with initial condition $f(x,0)=0),$ is given by%
\begin{equation}
f_{\mathcal{S}_{\psi }}(x,t)=\frac{e^{-t-k_{\alpha }x}}{x}\sum_{l=0}^{\infty
}\frac{(q_{1}k_{\alpha }^{\rho _{1}}x^{\rho _{1}}t)^{l}}{l!}W_{\rho
_{2},\rho _{1}l}(q_{2}k_{\alpha }^{\rho _{2}}x^{\rho _{2}}t),\qquad x\geq 0.
\label{ggr}
\end{equation}%
Moreover (\ref{ggr}) is the density of the absolutely continuous component
of $\mathcal{S}_{\psi }$ defined in (\ref{ss}), for $X_{j}^{\psi }$
independent and identically distributed r.v.'s, $j=1,2...$., with density $%
f_{X_{j}^{\psi }}(x)=\frac{q_{1}k_{\alpha }^{\rho _{1}}x^{\rho _{1}-1}}{%
\Gamma (\rho _{1})}+\frac{q_{2}k_{\alpha }^{\rho _{2}}x^{\rho _{2}-1}}{%
\Gamma (\rho _{2})},$ $x\geq 0.$
\end{corollary}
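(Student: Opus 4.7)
This corollary is the two-atom specialisation of Theorem 12 with $q(\rho) = q_1\delta(\rho - \rho_1) + q_2\delta(\rho - \rho_2)$, so the governing equation follows immediately from that theorem; only the explicit closed form (\ref{ggr}) requires work. Following the template established by Corollary 10, the plan is to compute the Laplace transform of the candidate density (\ref{ggr}) and verify that it matches the general transform given in the proof of Theorem 12.

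First I would substitute the two-atom $q$ into
\begin{equation*}
\widetilde{f}_{\mathcal{S}_\psi}(\eta, t) = \exp\!\left\{-t\int_0^1 \frac{(\eta+k_\alpha)^\rho - k_\alpha^\rho}{(\eta+k_\alpha)^\rho}\, q(\rho)\, d\rho\right\} - e^{-t}
\end{equation*}
and use $q_1 + q_2 = 1$ to reduce the exponent to $-t + tq_1 k_\alpha^{\rho_1}/(\eta+k_\alpha)^{\rho_1} + tq_2 k_\alpha^{\rho_2}/(\eta+k_\alpha)^{\rho_2}$, so that
\begin{equation*}
\widetilde{f}_{\mathcal{S}_\psi}(\eta, t) = e^{-t}\exp\!\left\{\frac{tq_1 k_\alpha^{\rho_1}}{(\eta+k_\alpha)^{\rho_1}}\right\}\exp\!\left\{\frac{tq_2 k_\alpha^{\rho_2}}{(\eta+k_\alpha)^{\rho_2}}\right\} - e^{-t}.
\end{equation*}
The stated density of the addends follows from (\ref{fr}) by inverting $1 - \psi(\eta) = q_1 k_\alpha^{\rho_1}/(\eta+k_\alpha)^{\rho_1} + q_2 k_\alpha^{\rho_2}/(\eta+k_\alpha)^{\rho_2}$ term by term.

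Next I would expand the Wright function in (\ref{ggr}) via its defining series $W_{\rho_2, \rho_1 l}(y) = \sum_{n \geq 0} y^n / [n!\, \Gamma(\rho_2 n + \rho_1 l)]$, producing the double series
\begin{equation*}
f_{\mathcal{S}_\psi}(x, t) = e^{-t - k_\alpha x}\sum_{l,n \geq 0} \frac{(q_1 k_\alpha^{\rho_1} t)^l (q_2 k_\alpha^{\rho_2} t)^n}{l!\, n!\, \Gamma(\rho_1 l + \rho_2 n)}\, x^{\rho_1 l + \rho_2 n - 1},
\end{equation*}
in which the $(l,n) = (0,0)$ term vanishes by the convention $1/\Gamma(0) = 0$. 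Applying $\mathcal{L}\{e^{-k_\alpha x} x^{\beta - 1}; \eta\} = \Gamma(\beta)/(\eta + k_\alpha)^\beta$ term by term, the $\Gamma(\rho_1 l + \rho_2 n)$ factors cancel and the remaining double sum factorises as the product of the two exponential series identified above, reproducing $\widetilde{f}_{\mathcal{S}_\psi}(\eta, t)$ exactly.

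The main subtlety is the bookkeeping of the $(l, n) = (0, 0)$ contribution: its absence from the density (forced by $1/\Gamma(0) = 0$) is precisely what accounts for the additive $-e^{-t}$ in the Laplace transform, rather than the bare product of exponentials. The term-by-term exchange of Laplace integration and summation is routine, justified by the entirety of the Wright series together with the absolute convergence of the resulting double sum for every $\eta > 0$; the governing equation is then a direct specialisation of Theorem 12, completing the verification.
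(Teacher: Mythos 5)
Your proposal is correct and follows exactly the route the paper intends: the paper's proof of this corollary is omitted with the remark that the calculations retrace those of Corollary 10, and your Laplace-transform verification of (\ref{ggr}) against the two-atom specialisation of the preceding theorem's transform --- expanding the Wright function, transforming term by term, and factorising the resulting double sum into the product of two exponentials, with the $(l,n)=(0,0)$ term correctly killed by $1/\Gamma(0)=0$ to account for the $-e^{-t}$ --- is precisely that computation. (Incidentally, your term-by-term inversion of $1-\psi(\eta)$ also recovers the factor $e^{-k_{\alpha}x}$ in the addends' density, which appears to have been dropped from the corollary's statement as printed but is present in the corresponding theorem.)
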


\begin{proof}
We omit the details of the calculations which retrace those of Corollary 10.
\end{proof}

\section{Risk-theory applications and concluding remarks}

So far we have described the interplay between integro-differential
equations based on Caputo-like operators (with non-singular kernels) and the
densities of the corresponding stochastic processes. We proved that, ranging
from exponential to Mittag-Leffler or incomplete-gamma kernels, we obtain
compound Poisson processes with very different jump distributions.

In particular, passing from the Caputo-Fabrizio operator to the
Atangana-Baleanu one, we even loose the finiteness of all the moments of the
jumps $X_{j}^{\psi },$ $j=1,2,....$ Indeed, in the exponential case, $%
\mathbb{E}\mathcal{S}_{\psi }=\mathbb{E}X_{j}^{\psi }=1/k_{\alpha
}=(1-\alpha )/\alpha ,$ as can be easily checked by applying the Wald
formula and considering that the Poisson rate is unitary. Analogously, in
the distributed-order exponential case (i.e. with the operator defined in
Def.8), the mean value of $\mathcal{S}_{\psi }$ is equal to $\mathbb{E}%
\left( (1-\alpha )/\alpha \right) $.\

On the other hand, due to the well-known power-law behavior of the
Mittag-Leffler distribution given in (\ref{xx}), the expected value of $%
\mathcal{S}_{\psi }$ and $X_{j}^{\psi }$ are both infinite and the same
holds for the distributed order counterpart, obtained under Def.11.

Finally, the incomplete-gamma case can be considered intermediate between
the previous ones: indeed, the mean value is $\mathbb{E}\mathcal{S}_{\psi }=%
\mathbb{E}X_{j}^{\psi }=\rho /k_{\alpha }$ (or $\mathbb{E}(\rho /k_{\alpha
}) $ in the distributed case) and thus differs from the exponential case
only by the constant $\rho .$ Nevertheless the behavior of the density (\ref%
{gg5}) is completely different, in the origin, from the exponential one,
since it tends to infinity, as in the Mittag-Leffler case.

All the previous results can be applied to a continuous-time risk model. If
we define the risk reserve process%
\begin{equation}
R(t):=a+\beta t-\sum_{j=1}^{N(t)}X_{j}^{\psi },  \label{rt}
\end{equation}%
where $a\geq 0$ is the initial risk reserve, $\beta >0$ is the premium
collection rate and $X_{j}^{\psi },$ for $j=1,2,...,$ are the claims
occurring according to the Poisson process, then we have that $R(t)=a+\beta
t-S_{\psi }(t).$ If we denote the expected claim size by $\mu $ (i.e. $\mu :=%
\mathbb{E}X_{j}^{\psi }$), it is evident that, in order to fulfill the net
profit condition $\beta >\mu $ (considering that our Poisson rate is equal
to $1$), we must discard the case of Mittag-Leffler distributed claim sizes,
since the expected value of (\ref{xx}) is infinite.

In the other cases, by considering (\ref{rt}) together with (\ref{lap3}), we
can obtain the differential equation satisfied by the moment generating
function of $R(t),t\geq 0,$ when the claim size has distribution with
Laplace transform given in (\ref{fr}). By taking the first derivative (with
respect to the time variable) of%
\begin{eqnarray}
\Phi _{R}(\eta ,t) &:&=\mathbb{E}e^{\eta R(t)}=e^{\eta (a+\beta t)}\mathbb{E}%
e^{-\eta \sum_{j=1}^{N(t)}X_{j}^{\psi }}  \label{rt3} \\
&=&e^{\eta (a+\beta t)}\widetilde{f}_{S_{\psi }}(\eta ,t),  \notag
\end{eqnarray}%
we get%
\begin{eqnarray}
\frac{\partial }{\partial t}\Phi _{R}(\eta ,t) &=&\beta \eta \Phi _{R}(\eta
,t)+e^{\eta (a+\beta t)}\frac{\partial }{\partial t}\widetilde{f}_{S_{\psi
}}(\eta ,t)  \label{rt2} \\
&=&\beta \eta \Phi _{R}(\eta ,t)+e^{\eta (a+\beta t)}\left[ e^{-t}-\psi
(\eta )e^{-\psi (\eta )t}\right]  \notag \\
&=&[\beta \eta -\psi (\eta )]\Phi _{R}(\eta ,t)+e^{\eta (a+\beta t)-t}\left[
1-\psi (\eta )\right] ,  \notag
\end{eqnarray}%
which is satisfied by (\ref{rt3}). It is evident from the first line of (\ref%
{rt2}) that also the differential equation governing the risk reserve
process can be expressed in terms of the convolution operator $\mathcal{D}%
_{x}^{\psi }$ treated here.

\ \

\end{document}